\theoremstyle{plain} 
\newtheorem{thm}{Theorem}[] 
\newtheorem{prop}[thm]{Proposition} 
\newtheorem{lemma}[thm]{Lemma}
\theoremstyle{definition}
\newtheorem{remark}[thm]{Remark}
\newcommand{\z}{{\mathbb Z}} 
\newcommand{\pj}{{{\mathbb P}^1}}
\newcommand{\piii}{{{\mathbb P}^3}}
\newcommand{\sce}{\mathscr{E}}
\newcommand{\scf}{\mathscr{F}} 
\newcommand{\scg}{\mathscr{G}}
\newcommand{\sco}{\mathscr{O}} 
\newcommand{\sch}{\mathscr{H}}
\newcommand{\sci}{\mathscr{I}}
\newcommand{\sct}{\mathscr{T}}
\newcommand{\tH}{\text{H}} 
\newcommand{\h}{\text{h}}
\newcommand{\izo}{\overset{\sim}{\rightarrow}} 
\newcommand{\ra}{\rightarrow} 
\newcommand{\lra}{\longrightarrow} 
\newcommand{\xra}{\xrightarrow}  
\newcommand{\vb}{\, \vert \, } 
\newcommand{\prim}{{\, \prime}} 
\newcommand{\secund}{{\prime \prime}}
\newcommand{\Ker}{\text{Ker}\, }
\newcommand{\Cok}{\text{Coker}\, }
\begin{document}

\title[4-instantons]{Four generated 4-instantons} 

\author[Anghel,~Coand\u{a}~and~Manolache]{Cristian~Anghel,~Iustin~Coand\u{a}~
and~Nicolae~Manolache}
\address{Institute of Mathematics of the Romanian Academy, P.O. Box 1--764, 
RO--014700, Bucharest, Romania}
\email{Iustin.Coanda@imar.ro~Cristian.Anghel@imar.ro~Nicolae.Manolache@imar.ro}

\subjclass[2010]{Primary:14J60; Secondary: 14H50, 14N20}

\keywords{projective space, mathematical instanton bundle, globally generated 
sheaf}

\begin{abstract}
We show that there exist mathematical 4-instanton bundles $F$ on the 
projective 3-space such that $F(2)$ is globally generated (by four global 
sections). This is equivalent to the existence of elliptic space curves of 
degree 8 defined by quartic equations. There is a (possibly incomplete) 
intersection theoretic argument for the existence of such curves in 
D'Almeida [Bull. Soc. Math. France 128 (2000), 577--584] and another argument, 
using results of Mori [Nagoya Math. J. 96 (1984), 127--132], in Chiodera and 
Ellia [Rend. Istit. Univ. Trieste 44 (2012), 413--422]. Our argument is 
quite different. We prove directly the former fact, using the method of 
Hartshorne and Hirschowitz [Ann. Scient. \'{E}c. Norm. Sup. (4) 15 (1982), 
365--390] and the geometry of five lines in the projective 3-space.    
\end{abstract}

\maketitle

A mathematical $n$-instanton bundle on $\piii$ ($n$-instanton, for short) is 
a rank 2 vector bundle $F$ on $\piii$, with $c_1(F) = 0$, $c_2(F) = n$, such 
that $\tH^i(F(-2)) = 0$, $i = 0, \ldots , 3$ (since $c_1(F) = 0$ one has 
$F \simeq F^\vee$ hence, by Serre duality, $\h^i(F(-2)) = \h^{3-i}(F(-2))$, 
$i = 0,\, 1$, which implies that $\chi(F(-2)) = 0$). Let us recall that if 
$Y$ is a closed subscheme of $\piii$ of dimension 1 then $\tH^1(\sco_Y(-2)) 
\neq 0$ (if $C$ is a reduced and irreducible closed subscheme of $Y$ of 
dimension 1 then the map $\tH^1(\sco_Y(-2)) \ra \tH^1(\sco_C(-2))$ is 
surjective and $\tH^1(\sco_C(-2)) \neq 0$ because $\chi(\sco_C(-2)) < 0$ by 
Riemann-Roch on $C$; this argument, which appears in the proof of 
\cite[Lemma~1]{po}, was suggested by C. B\u{a}nic\u{a}). It follows that if  
$Y$ is locally complete intersection with $\omega_Y \simeq \sco_Y(m)$ 
then $\h^0(\sco_Y(m+2)) = \h^0(\omega_Y(2)) = \h^1(\sco_Y(-2)) > 0$. One 
deduces easily that if $\tH^0(F(-1)) \neq 0$ then $F \simeq \sco_\piii(1) 
\oplus \sco_\piii(-1)$ (hence $n = -1$), and if $\tH^0(F(-1)) = 0$ and 
$\tH^0(F) \neq 0$ then $F \simeq 2\sco_\piii$ (hence $n = 0$). If $\tH^0(F) 
= 0$ then $F$ is stable hence $n \geq 1$. Examples of $n$-instantons are 
the bundles that can be obtained as extensions$\, :$ 
\begin{equation}\label{E:tHooft} 
0 \lra \sco_\piii(-1) \lra F \lra \sci_{L_1 \cup \ldots \cup L_{n+1}}(1) \lra 0
\end{equation}  
where $L_1, \ldots , L_{n+1}$ are mutually disjoint lines in $\piii$. For 
$n \leq 2$, all $n$-instantons can be obtained in this way. This is no longer 
true for $n \geq 3$. 

We are concerned with the problem of the global generation of twists of 
instantons. It is well known that if $F$ is an $n$-instanton then $F$ is 
$n$-regular hence $F(n)$ is globally generated (the argument is recalled in 
\cite[Remark~4.7]{acm}). Gruson and Skiti \cite{gs} showed that if $F$ is a 
3-instanton having no jumping line of maximal order 3 then $F(2)$ is globally 
generated. Our aim here is to prove the following$\, :$ 

\begin{prop}\label{P:f(2)gg} 
There exist $4$-instantons $F$ on $\piii$ such that $F(2)$ is globally 
generated. 
\end{prop}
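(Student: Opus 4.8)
\emph{Step 1: reformulation via Serre's construction.} The plan is first to translate the statement into a question about space curves. Suppose $Y \subset \piii$ is a smooth connected curve of degree $8$ and arithmetic genus $1$; then $Y$ is a local complete intersection and $\omega_Y \simeq \sco_Y$, so Serre's construction yields a rank $2$ vector bundle $F'$ with a section vanishing on $Y$,
\begin{equation*}
0 \lra \sco_\piii \lra F' \lra \sci_Y(4) \lra 0 ,
\end{equation*}
whence $c_1(F') = 4$ and $c_2(F') = \deg Y = 8$. Put $F := F'(-2)$; then $F$ has rank $2$, $c_1(F) = 0$, $c_2(F) = 4$. Twisting the displayed sequence by $\sco_\piii(-4)$ and using $\tH^0(\sci_Y) = 0$ and $\tH^1(\sci_Y) = 0$ (the latter because $Y$ is connected and reduced), one gets $\tH^i(F(-2)) = 0$ for $i = 0,\, 1$, hence for all $i$, since $F \simeq F^\vee$ and Serre duality give $\h^i(F(-2)) = \h^{3-i}(F(-2))$; thus $F$ is a $4$-instanton. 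Chasing the displayed sequence — using that $F'$ is locally free and $\tH^1(\sco_\piii) = 0$ — one checks that $F(2) = F'$ is globally generated if and only if $\sci_Y(4)$ is globally generated, i.e. if and only if $Y$ is cut out scheme-theoretically by the quartics through it; and if moreover $\tH^1(\sci_Y(4)) = 0$ then $\h^0(\sci_Y(4)) = 35 - 32 = 3$, so $\h^0(F(2)) = 1 + 3 = 4$. Hence it is enough to produce one smooth connected elliptic curve $Y$ of degree $8$ in $\piii$ with $\tH^1(\sci_Y(4)) = 0$ which is the scheme-theoretic intersection of the quartics containing it.

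\emph{Step 2: a degenerate curve made of lines.} I would obtain such a $Y$ as a general deformation of a reduced connected nodal curve $Y_0 \subset \piii$ which is a union of eight lines. For $p_a(Y_0) = 1$ the dual graph of $Y_0$ must carry exactly one independent cycle; following the hint in the title, one takes this cycle to be a pentagon $L_1, \dots , L_5$ of five lines in sufficiently general position and attaches three further general lines $L_6, L_7, L_8$, each meeting $L_1 \cup \dots \cup L_5$ transversally in a single point. Then $Y_0$ is connected, has degree $8$, has $8$ nodes, and $p_a(Y_0) = 0 + 8 - 8 + 1 = 1$. For a suitably general such configuration one must verify three facts: first, $\tH^1(\sci_{Y_0}(k)) = 0$ for $k \geq 3$ (in particular $\h^0(\sci_{Y_0}(4)) = 3$); second, $\sci_{Y_0}(4)$ is globally generated, i.e. the net of quartics through $Y_0$ has base scheme exactly $Y_0$; third, $\tH^1(N_{Y_0/\piii}) = 0$.

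\emph{Step 3: the verifications.} For the postulation statement one proceeds by the \emph{m\'ethode d'Horace}: intersecting with a general plane $H$ and passing to the residual configuration reduces $\tH^1(\sci_{Y_0}(k))$ to the postulation of smaller unions of lines together with points in $H$, which is governed by the Hartshorne–Hirschowitz analysis of lines in general position in $\piii$. For $\tH^1(N_{Y_0/\piii}) = 0$ one filters the normal bundle through the components of $Y_0$, using $N_{L/\piii} \simeq \sco_L(1)^{\oplus 2}$ for a line $L$ and the standard node-smoothing exact sequences, again reducing to a finite check on subconfigurations; the point of choosing a pentagon rather than a triangle or a quadrilateral is that it forces $Y_0$ to be non-planar and spread out enough for these vanishings. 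The remaining point — that the three-dimensional system of quartics through $Y_0$ cuts $Y_0$ out scheme-theoretically, with no residual component and no embedded point along the $L_i$ or at the nodes — is the one that really exploits the explicit geometry of five lines in $\piii$, and I expect it to be the main obstacle: the quartics are only just numerous enough ($8$ is half of $4 \cdot 4$, so $Y_0$ lies in the borderline self-linked range), and the three requirements above pull against one another, so the configuration has to be arranged with care.

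\emph{Step 4: smoothing and conclusion.} Granting the three facts, $\tH^1(N_{Y_0/\piii}) = 0$ makes the Hilbert scheme smooth at $[Y_0]$, and since $Y_0$ is a connected reduced nodal local complete intersection curve it is smoothable there, so a general deformation $Y$ of $Y_0$ is a smooth connected curve of degree $8$ and genus $1$. In the flat family $\sci_{Y_t}(4)$ the Euler characteristic is constant and, by semicontinuity together with the vanishing of the higher cohomology, $\h^0(\sci_{Y_t}(4)) = 3$ for $t$ near $t_0$; surjectivity of the evaluation map $\tH^0(\sci_{Y_t}(4)) \otimes \sco_\piii(-4) \ra \sci_{Y_t}$ is then an open condition on $t$, so $\sci_Y(4)$ is globally generated for $Y$ general. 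Feeding this $Y$ into Step 1 produces a $4$-instanton $F$ on $\piii$ with $F(2)$ globally generated (by its four global sections), which is the assertion.
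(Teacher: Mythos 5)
Your Step 1 is a correct reduction, but it is not the paper's: it is exactly the reduction of Chiodera--Ellia \cite{ce} that the paper cites as the \emph{previous} proof (a $4$-instanton $F$ with $F(2)$ globally generated is, via the Serre correspondence, the same thing as a connected elliptic octic $Y \subset \piii$ with $\tH^1(\sci_Y(4)) = 0$ and $\sci_Y(4)$ globally generated). The paper deliberately avoids elliptic curves altogether: it takes $Y$ to be five \emph{mutually disjoint} lines with no common $5$-secant (not a pentagon), constructs an explicit epimorphism $\Omega_\piii(1) \ra \sci_Y(3)$ (Lemma~\ref{L:omega(1)iy(3)}, which is the technical heart of the paper), and then deforms the sheaf $\sco_\piii(-1) \oplus \sci_Y(1)$ inside a Quot scheme following Hartshorne--Hirschowitz, intersecting the open locus of globally generated twists with the open locus of instantons. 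So your route is genuinely different from the paper's, and closer to the one the paper was written to replace.

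The problem is that Steps 2--3 are a plan, not a proof. The three facts on which everything rests --- $\tH^1(\sci_{Y_0}(4)) = 0$, $\tH^1(N_{Y_0}) = 0$, and above all that the net of quartics through the degenerate configuration $Y_0$ of eight lines cuts it out scheme-theoretically with no embedded points at the eight nodes and no residual components --- are announced as ``to be verified,'' and you yourself identify the last one as ``the main obstacle.'' That obstacle is precisely the hard content of the proposition: for the \emph{smooth} elliptic octic, Chiodera--Ellia need Mori's nontrivial result on quartic surfaces with Picard lattice $\z H \oplus \z C$ to get global generation, and D'Almeida's more elementary argument is regarded as incomplete, so there is no reason to expect the stick-figure version to be a routine Horace computation; with only three quartics available the base locus is genuinely in danger of acquiring embedded points at the nodes (where $\sci_{Y_0}$ locally needs a generator of the form $yz$), and nothing in the proposal controls this. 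A secondary gap: smoothability of $Y_0$ in $\piii$ requires more than $\tH^1(N_{Y_0}) = 0$; one must also smooth the nodes inside the embedded deformation (surjectivity onto the local $T^1$'s), which for stick figures is a theorem of Hartshorne--Hirschowitz that should at least be invoked. Until a concrete configuration is exhibited for which the three verifications are actually carried out, the argument does not establish the statement.
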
 

It is shown in \cite[Remark~6.4]{acm} that if $F$ is a 4-instanton with $F(2)$ 
globally generated then $\tH^0(F(1)) = 0$ and $\tH^1(F(2)) = 0$ (hence 
$\h^0(F(2)) = 4$). It follows that the 4-instantons $F$ with $F(2)$ globally 
generated form a nonempty open subset of the moduli space of 4-instantons.  

To see that the result stated above is nontrivial we recall that, by a 
theorem of Rahavandrainy \cite{ra}, a general 4-instanton $F$ admits a minimal 
free resolution of the form$\, :$ 
\[
0 \lra 4\sco_\piii(-5) \lra 10\sco_\piii(-4) \lra 4\sco_\piii(-2) \oplus 
4\sco_\piii(-3) \lra F \lra 0\, . 
\] 

A proof of Prop.~\ref{P:f(2)gg} was given by Chiodera and Ellia \cite{ce}. 
They show that there exist elliptic curves $X$ in $\piii$ of degree 8 with 
$\sci_X(4)$ globally generated and construct $F$ as an extension$\, :$ 
\[
0 \lra \sco_\piii(-2) \lra F \lra \sci_X(2) \lra 0\, . 
\]
The proof of the existence of such elliptic curves uses results of Mori 
\cite{mo} (in particular, it uses the fact that there exist nonsingular 
quartic surfaces $\Sigma \subset \piii$ with $\text{Pic}\, \Sigma = \z H 
\oplus \z C$, where $H$ denotes a plane section and $C$ is an elliptic curve 
of degree 8). Another argument for the existence of this kind of elliptic 
curves appears in D'Almeida \cite{da} but it seems to be incomplete, according 
to Chiodera and Ellia \cite[Remark~2.11]{ce}. 

\vskip2mm 

We prove Prop.~\ref{P:f(2)gg} in a quite different, more elementary, way  
using the method of Hartshorne and Hirschowitz \cite{hh}. The key point 
of our proof is the following$\, :$ 

\begin{lemma}\label{L:omega(1)iy(3)} 
Let $L_1 , \ldots , L_5$ be mutually disjoint lines in $\piii$ such that 
their union admits no $5$-secant. Then there exist epimorphisms$\, :$ 
\[
\Omega_\piii(1) \lra \sci_{L_1 \cup \ldots \cup L_5}(3) \lra 0\, . 
\]
\end{lemma}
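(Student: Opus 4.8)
The plan is to translate the assertion into the existence of a section of $\sct_\piii(2)$ with zero scheme exactly $Y:=L_1\cup\dots\cup L_5$, and then to produce such a section by the specialization method of Hartshorne and Hirschowitz. Since $\Omega_\piii(1)$ is locally free and $\sci_Y\subset\sco_\piii$, a homomorphism $\Omega_\piii(1)\ra\sci_Y(3)$ is the same as one $\Omega_\piii(1)\ra\sco_\piii(3)$ with image inside $\sci_Y(3)$; and since $\sch om(\Omega_\piii(1),\sco_\piii(3))\simeq\sct_\piii(-1)\otimes\sco_\piii(3)=\sct_\piii(2)$, with subsheaf $\sch om(\Omega_\piii(1),\sci_Y(3))=\sci_Y\otimes\sct_\piii(2)$, these homomorphisms correspond to sections $s\in\tH^0(\sct_\piii(2))$ vanishing along $Y$. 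Dualizing $s:\sco_\piii\ra\sct_\piii(2)$ and twisting by $\sco_\piii(3)$, the image of $s^\vee:\Omega_\piii(1)\ra\sco_\piii(3)$ is $\sci_{Z(s)}(3)$, where $Z(s)$ is the zero scheme of $s$ with its natural structure; since $s$ vanishes on $Y$ we have $Z(s)\supseteq Y$, and the resulting homomorphism $\Omega_\piii(1)\ra\sci_Y(3)$ is an epimorphism if and only if $Z(s)=Y$ as schemes. Thus the Lemma amounts to$\, :$ \emph{there is $s\in\tH^0(\sct_\piii(2))$ with $Z(s)=Y$.}

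The hypothesis is necessary for this. For a line $\ell\subset\piii$ one has $\sct_\piii|_\ell\simeq\sco_\ell(2)\oplus\sco_\ell(1)^{\oplus2}$, so $\sct_\piii(2)|_\ell\simeq\sco_\ell(4)\oplus\sco_\ell(3)^{\oplus2}$; if a line $M$ met every $L_i$ --- necessarily in five distinct points, as the $L_i$ are disjoint --- then any $s$ vanishing on $Y$ would restrict on $M$ to a section of $\sco_M(4)\oplus\sco_M(3)^{\oplus2}$ vanishing at five points, hence to $0$, so $M\subseteq Z(s)$ and $Z(s)\neq Y$. The Lemma says this is the \emph{only} obstruction. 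Accordingly, set $V_Y:=\tH^0(\sci_Y\otimes\sct_\piii(2))$; from $0\ra\sci_Y\otimes\sct_\piii(2)\ra\sct_\piii(2)\ra\sct_\piii(2)|_Y\ra0$ together with $\h^0(\sct_\piii(2))=70$ and $\h^0(\sct_\piii(2)|_Y)=5(5+4+4)=65$, one expects $\dim V_Y=5$.

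I would then establish, for the given $Y$$\, :$ (a) $\h^1(\sci_Y\otimes\sct_\piii(2))=0$, whence $\dim V_Y=5$; (b) for every $p\notin Y$ the evaluation $V_Y\ra\sct_\piii(2)|_p$ is onto, i.e.\ $\sci_Y\otimes\sct_\piii(2)$ is globally generated off $Y$ --- it is here that the no-$5$-secant hypothesis enters; (c) the general $s\in V_Y$ has $\overline{ds}:N_{Y/\piii}\ra\sct_\piii(2)|_Y$ injective at each point of $Y$. Granting (a)--(c), take a general $s\in V_Y$: by (b) and Bertini, $Z(s)$ meets $\piii\setminus Y$ in a finite set $W$, and by (c), $Z(s)=Y$ (reduced) in a neighbourhood of $Y$, so $Z(s)=Y\sqcup W$. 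But the contribution of the component $Y$ to $c_3(\sct_\piii(2))=40$ is $\deg\bigl(c_1(\sct_\piii(2)|_Y)-c_1(N_{Y/\piii})\bigr)=\sum_{i=1}^{5}(10-2)=40$ (the $10$ and $2$ being the degrees of $\sct_\piii(2)|_{L_i}$ and $N_{L_i/\piii}$), leaving $\deg W=0$; hence $W=\emptyset$, $Z(s)=Y$, and by the reduction above the Lemma follows.

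The content is in (a)--(c), which I would handle by the m\'ethode d'Horace: specialize some of the $L_i$ into a plane $H$ and use $0\ra\sci_{Y'}\otimes\sct_\piii(1)\ra\sci_Y\otimes\sct_\piii(2)\ra\sci_{Y\cap H,\,H}\otimes\sct_\piii(2)|_H\ra0$, where $Y'$ is the union of the lines not lying in $H$ and $Y\cap H\subset H\cong\pii$ is the union of the lines in $H$ together with the points cut on $H$ by the others; since $0\ra\sct_\pii(2)\ra\sct_\piii(2)|_H\ra\sco_\pii(3)\ra0$, this reduces (a)--(c) to analogous assertions on $\pii$ and to configurations of fewer lines in $\piii$, down to cases checkable by hand. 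The delicate point is that pushing lines into a plane can create new incidences and transversals, so the degeneration must be chosen --- using the geometry of the five lines (the two transversals of each quadruple, the smooth quadric through each triple, the fact that no four of the five can lie on a quadric once there is no $5$-secant) --- so that at every stage the residual configuration stays general enough and no $5$-secant appears in the limit; this is precisely where the hypothesis is consumed. I expect (b) to be the main obstacle: a base \emph{line} is ruled out by the restriction argument above, but excluding \emph{every} hidden base curve (a base conic, say, is not excluded a priori by a crude count) and thereby forcing the base locus down to $Y$ is what the full Horace induction must accomplish.
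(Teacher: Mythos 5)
Your reduction of the Lemma to the existence of a section $s \in \tH^0(\text{T}_\piii(2))$ with zero scheme exactly $Y$ is correct and is also how the paper's proof works (via $\sigma^\vee(3)$); your remark that a $5$-secant would force a base line, your dimension count $\dim V_Y = 70 - 65 = 5$, and your final excess-intersection count $c_3(\text{T}_\piii(2)) = 40 = \sum_i (10-2)$ forcing $W = \emptyset$ are all sound (the paper performs the equivalent check by showing $c_3(\Ker \sigma) = 0$ in Remark~\ref{R:chern}). But the proof has a genuine gap exactly where you yourself locate it: steps (a), (b), (c) --- above all (b), that the base locus of $V_Y$ is precisely $Y$ --- are not proved, only announced as the outcome of a Horace induction whose degenerations are never specified and whose delicate points (new incidences and transversals appearing in the limit, hidden base curves not excluded by a crude count) you flag without resolving. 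Since the entire content of the Lemma is concentrated in these steps, what you have is a correct strategy and a correct bookkeeping of the endgame, not a proof. It is also not clear that (b) is the right statement to aim for: you would need global generation of $\sci_Y \otimes \text{T}_\piii(2)$ away from $Y$ for the \emph{whole} $5$-dimensional system $V_Y$, which is a stronger and possibly harder assertion than what is actually needed.

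The paper sidesteps the Horace machinery entirely by exhibiting an explicit $3$-parameter subfamily of $V_Y$: the sections $s = a_1 q_{345} t_{12} + a_2 q_{145} t_{23} + a_3 q_{125} t_{34}$, where $t_{ij} \in \tH^0(\text{T}_\piii)$ vanishes exactly on $L_i \cup L_j$ and $q_{ijl}$ is the equation of the quadric through $L_i, L_j, L_l$. The dependency locus of $t_{12}, t_{23}, t_{34}$ is computed to be $Q_{123} \cup Q_{234}$ (Lemma~\ref{L:degeneracy}, via an Eagon--Northcott complex), and the common zeros of the three coefficient quadrics are controlled by Lemma~\ref{L:l5}; together these confine $Z(s)$ to $Q_{123} \cup Q_{234} \cup L_5$, after which a line-by-line analysis on the rulings of these quadrics (the three Claims, plus Lemma~\ref{L:lprimcapz}) pins $Z(s)_{\text{CM}}$ down to $Y$, and the $c_3$ computation kills the residual points. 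If you want to complete your version, you must either carry out the full Horace induction for (a)--(c) --- including a genuine argument excluding base curves supported on the quadrics through triples of the $L_i$, which is where the no-$5$-secant hypothesis must be consumed --- or replace it by an explicit family of sections as the paper does.
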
 

Assuming this lemma, for the moment, let us give the 

\begin{proof}[Proof of Prop.~\ref{P:f(2)gg}] 
Let $L_1 , \ldots , L_5$ be five mutually disjoint lines in $\piii$ such that 
their union $Y$ admits no 5-secant. One considers, 
following \cite{hh}, the torsion free sheaf $\scf_0 = \sco_\piii(-1) \oplus 
\sci_Y(1)$. It is classically known that $\tH^1(\sci_Y(3)) = 0$ (we recall the 
argument, for completeness, in the proof of Lemma~\ref{L:hiiy(3)=0} below). 
It follows that $\scf_0$ is 3-regular. Moreover, $\h^0(\scf_0(3)) = 20$. 
$\scf_0$ corresponds to a point $q_0$ of 
Grothendieck's \emph{Quot scheme} $Q$ parametrizing the quotients a 
$20\sco_\piii(-3)$ having the same Hilbert polynomial as $\scf_0$. 
By \cite[\S 4]{hh}, $Q$ is nonsingular at $q_0$. Let $\sce$ be the universal 
family on $\piii \times Q$ (such that $\sce_{q_0} \simeq \scf_0$) and let 
$Q_0$ be the irreducible component of $Q$ containing $q_0$. 

Consider, now, an epimorphism $\sigma : \Omega_\piii(1) \ra \sci_Y(3)$ and let 
$\scg$ be the cokernel of the composite morphism$\, :$ 
\[
\Ker \sigma \lra \Omega_\piii(1) \lra 4\sco_\piii \, .  
\] 
Using the exact sequence $0 \ra \Omega_\piii(1) \ra 4\sco_\piii \ra \sco_\piii(1) 
\ra 0$, one sees easily that $\scg$ can be realized as an extension$\, :$ 
\[
0 \lra \sci_Y(3) \lra \scg \lra \sco_\piii(1) \lra 0\, . 
\]
$\scg$ is 1-regular (because $\sci_Y(3)$ is) and globally generated (it is a 
quotient of $4\sco_\piii$). Since, by the universal property of the Quot 
scheme, there exists a canonical morphism$\, :$ 
\[
\text{Ext}^1(\sco_\piii(-1), \sci_Y(1)) \lra Q_0 
\]  
(here $\text{Ext}^1$ is regarded as an affine space) it follows that there 
exist points $q \in Q_0$ such that $\sce_q(2)$ is $1$-regular and globally 
generated. These points form a dense open subset of $Q_0$. 

On the other hand, since there exists a canonical morphism$\, :$ 
\[
\text{Ext}^1(\sci_Y(1), \sco_\piii(-1)) \lra Q_0
\] 
it follows that there exist points $q \in Q_0$ such that $\sce_q$ is locally 
free, with $\tH^0(\sce_q) = 0$ and $\tH^1(\sce_q(-2)) = 0$. These points 
form an open subset of $Q_0$. 

If $q$ is a point in the intersection of these two open subsets of $Q_0$ then 
$\sce_q$ is a 4-instanton with $\sce_q(2)$ globally generated.  
\end{proof}  

The proof of Lemma~\ref{L:omega(1)iy(3)} will occupy the rest of the  
paper. We begin by recalling some easy lemmata. 

\begin{lemma}\label{L:qcapqprim} 
Let $L_1 , \ldots , L_4$ be mutually disjoint lines in $\piii$ such that their 
union is not contained in a quadric surface. Let $Q$ (resp., $Q^\prime$) be the 
quadric surface containing $L_1 \cup L_2 \cup L_3$ (resp., $L_2 \cup L_3 \cup 
L_4$). If $L_4 \cap Q$ consists of two points, put $X = L \cup L^\prime$, where 
$L$ and $L^\prime$ are the lines from the other ruling of $Q$ containing those 
points. If $L_4 \cap Q$ consists of only one point, denote by $X$ the divisor 
$2L$ on $Q$, where $L$ is the line from the other ruling of $Q$ containing 
that point. Then, as schemes$\, :$ 
\[
Q \cap Q^\prime = L_2 \cup L_3 \cup X\, . 
\]
\end{lemma}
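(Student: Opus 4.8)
The plan is to do divisor theory on the smooth quadric $Q$. I would first record the classical facts that three mutually disjoint lines in $\piii$ lie on a \emph{unique} quadric surface and that this quadric is smooth: a reducible quadric $H_1\cup H_2$ cannot contain three pairwise disjoint lines (two of them would lie in a common plane, hence meet), on a quadric cone any two lines meet at the vertex, and uniqueness holds because on the smooth quadric the three lines form a divisor of type $(3,0)$, which cannot be a subdivisor of the type $(2,2)$ divisor cut on it by a second quadric. Thus $Q$ and $Q^\prime$ are well-defined smooth quadrics, both containing $L_2\cup L_3$, and the hypothesis that $L_1\cup\cdots\cup L_4$ lies on no quadric gives $Q\neq Q^\prime$; since $Q$ is irreducible and $Q\not\subseteq Q^\prime$, the intersection $Q\cap Q^\prime$ is a curve, i.e.\ an effective Cartier divisor on $Q$.

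Next I would identify this divisor. Take the ruling of $Q\cong\pj\times\pj$ containing $L_1,L_2,L_3$ to be the one of type $(1,0)$; then $Q\cap Q^\prime=\sco_\piii(2)|_Q$ has type $(2,2)$ while $L_2+L_3$ has type $(2,0)$, and since $L_2,L_3\subset Q^\prime$ the residual divisor $X:=(Q\cap Q^\prime)-L_2-L_3$ is effective of type $(0,2)$, with $Q\cap Q^\prime=L_2\cup L_3\cup X$ as schemes. A divisor of type $(0,2)$ on $Q$ is either a union $L\cup L^\prime$ of two distinct lines of the other ruling or a double line $2L$ of that ruling, and any point $p$ of such an $X$ forces $X$ to contain the whole line of the other ruling through $p$ (because $X$ is cut by a form pulled back from the $\pj$ parametrizing that ruling). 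It therefore remains only to locate the points of $X$ coming from $L_4$.

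Here I would use that $L_4\subset Q^\prime$ and that $L_4$ is disjoint from $L_2\cup L_3$, whence, as schemes, $L_4\cap Q=L_4\cap(Q\cap Q^\prime)=L_4\cap X$; in particular the length-two scheme $L_4\cap Q$ is a subscheme of $X$. If $L_4\cap Q$ is two distinct points $p,p^\prime$, then $X$ contains the lines $L,L^\prime$ of the other ruling through $p$ and through $p^\prime$, and these are distinct — a common line would pass through $p$ and $p^\prime$, hence equal $L_4$, which is not on $Q$ — so $X=L\cup L^\prime$ on comparing degrees. If $L_4\cap Q$ is a single (double) point $p$, then $X$ contains the line $L$ of the other ruling through $p$, so $X=L\cup L^{\prime\prime}$ with $L^{\prime\prime}$ in that ruling; if $L^{\prime\prime}\neq L$ then $X$ is smooth at $p$, and the length-two subscheme $L_4\cap Q\subset X$, being contained in the smooth branch $L$ near $p$, would be the divisor $2p$ on $L$, forcing $T_p L_4=T_p L$ and hence $L_4=L\subset Q$, which is absurd; therefore $X=2L$.

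The argument is essentially routine; the one step demanding care is the tangential case, where one must track the scheme structure of $L_4\cap Q$ and exploit the fact that a length-two subscheme of $\piii$ supported at a point determines a tangent direction — this is exactly what upgrades $X\supseteq L$ to $X=2L$ and rules out $L^{\prime\prime}\neq L$.
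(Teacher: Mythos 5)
Your proof is correct and follows essentially the same route as the paper: both decompose $Q\cap Q^\prime$ as the divisor $L_2+L_3$ plus a residual effective divisor of type $(0,2)$ on $Q$, and then identify that residual through its incidence with $L_4$ (the paper observes that the residual lines lie on $Q^\prime$ and meet $L_2$ and $L_3$, hence meet $L_4$; you equivalently compute $L_4\cap Q=L_4\cap X$ as schemes). Your explicit handling of the tangential case, where the scheme structure of $L_4\cap Q$ forces $X=2L$, spells out what the paper compresses into ``the assertion follows.''
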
 

\begin{proof}
As divisors on $Q$, $Q \cap Q^\prime = L_2 + L_3 + \Lambda + \Lambda^\prime$, 
where $\Lambda$ and $\Lambda^\prime$ are (not necessarily distinct) lines from 
the other ruling of $Q$. Since $\Lambda$ and $\Lambda^\prime$ must intersect 
$L_4$ (because they intersect $L_2$ and $L_3$ and are contained in $Q^\prime$) 
the assertion follows. 
\end{proof} 

\begin{lemma}\label{L:il1l4x} 
Under the hypothesis of Lemma~\ref{L:qcapqprim}, the ideal sheaf of 
$L_1 \cup \ldots \cup L_4 \cup X$ has a resolution of the form$\, :$ 
\[
0 \lra 3\sco_\piii(-4) \lra 4\sco_\piii(-3) \lra 
\sci_{L_1 \cup \ldots \cup L_4 \cup X} \lra 0\, . 
\]
\end{lemma}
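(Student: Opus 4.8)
The plan is to exhibit the resolution by explicitly producing four cubic forms cutting out $Z := L_1 \cup \ldots \cup L_4 \cup X$ and then checking that their first syzygies reduce to three generators in degree $4$. The starting observation is that $Z$ has a convenient geometric decomposition supplied by Lemma~\ref{L:qcapqprim}: writing $Q$ for the quadric through $L_1\cup L_2\cup L_3$ and $Q'$ for the quadric through $L_2\cup L_3\cup L_4$, that lemma gives $Q\cap Q' = L_2\cup L_3\cup X$ as schemes, so $Z = L_1 \cup (Q\cap Q')$. In particular $Z$ is contained in $Q$ and also in any cubic of the form $q\cdot\ell + q'\cdot\ell'$ vanishing appropriately; more to the point, $Z$ sits inside the complete intersection $Q\cap Q'$ of two quadrics together with the extra line $L_1$.

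The first step is to compute the Hilbert polynomial (equivalently, the degree and arithmetic genus) of $Z$ and to verify that the proposed resolution is numerically consistent: $Z$ has degree $4+2 = 6$, and a Betti table $0 \to 3\sco(-4) \to 4\sco(-3) \to \sci_Z \to 0$ forces a specific Hilbert function, which one checks matches. This pins down that $\h^0(\sci_Z(3)) = 4$ and $\h^0(\sci_Z(2)) = 0$ (indeed $Q$ is the unique quadric through $L_1\cup L_2\cup L_3$ and it does not contain $L_4$, while $X$ meets $L_4$), so the minimal generators of $\sci_Z$ live in degree $\ge 3$ and there are exactly four of them in degree $3$ provided $\sci_Z$ is generated in degree $3$.

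The second, and main, step is precisely to show that $\sci_Z$ is generated by cubics. Here I would use the liaison/residuation structure: $L_2\cup L_3$ is linked to $X$ inside the complete intersection $Q\cap Q'$, and $L_1 \cup L_2 \cup L_3$ is a curve lying on $Q$ whose ideal on $Q$ (in the product of linear systems $\sco_Q(1,1)$) is well understood. Concretely, pick one quadric $Q$ and one quadric $Q'$ as the first two cubics after multiplying by suitable linear forms: $Q$ vanishes on $L_1\cup L_2\cup L_3\cup X$ but not on $L_4$, so $Q$ times the (unique up to scalar) plane $\pi$ through $L_4$ — no, rather, one wants cubics vanishing on all of $Z$. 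The cleanest route: $\sci_Z \supseteq \sci_{Q\cap Q'}\cdot\sci_{L_1}$ is false in general, so instead use that $Z = L_1 \cup (Q\cap Q')$ and apply the standard exact sequence $0 \to \sci_{L_1 \cup (Q\cap Q')} \to \sci_{L_1}\oplus\sci_{Q\cap Q'} \to \sci_{L_1 \cap (Q\cap Q')} \to 0$ (Mayer--Vietoris for ideal sheaves of a union), where $L_1 \cap (Q\cap Q')$ is the set of points where $L_1$ meets $Q$ — at most two points, since $L_1$ is disjoint from $L_2, L_3, L_4$ and meets $Q$ in the two points it shares with... wait, $L_1 \subset Q$, so $L_1 \cap (Q\cap Q') = L_1 \cap Q' $, which is the scheme of length two (or one) where $L_1$ meets $Q'$. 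Twisting by $3$ and taking cohomology, $\h^0(\sci_Z(3))$ is computed from $\h^0(\sci_{L_1}(3)) = 16$, $\h^0(\sci_{Q\cap Q'}(3)) = \h^0(\sco(1))\cdot 2 = 8$ (the cubics through the $(2,2)$ complete intersection are $\ell\cdot Q + \ell'\cdot Q'$), and the length-$\le 2$ correction term, which lands us at $4$; and the same sequence, together with $1$-regularity of $\sci_{L_1}$, $\sci_{Q\cap Q'}$ and of the finite scheme, shows $\sci_Z$ is $4$-regular and hence generated in degree $\le 4$ — one then improves this to degree $3$ using that the degree-$3$ part already has the expected dimension $4$ and that the generic cubic section is already a complete intersection minus $L_1$.

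The third step is bookkeeping the syzygies. Once we know $\sci_Z$ has exactly four cubic generators $f_1,\dots,f_4$, the kernel $K$ of $4\sco_\piii(-3) \xra{(f_i)} \sci_Z$ is a rank-$1$ reflexive sheaf, hence a twisted ideal sheaf; computing $\chi$ via the Hilbert polynomial of $Z$ forces $K$ to have the Hilbert polynomial of $3\sco_\piii(-4)$, and since $\sci_Z$ is $4$-regular the module $\bigoplus_m\tH^0(K(m))$ is generated in degree $\le 4$ — showing it needs exactly three generators in degree $4$ and no higher syzygies then gives the asserted resolution $0\to 3\sco_\piii(-4)\to 4\sco_\piii(-3)\to\sci_Z\to 0$. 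I expect the main obstacle to be the sharp statement ``generated in degree exactly $3$, with first syzygy module exactly $3\sco(-4)$'': a priori the Mayer--Vietoris argument only gives $4$-regularity and hence generation in degree $\le 4$, and one must rule out any genuine quartic generator and any linear syzygy among the cubics. The way to close this gap is to use the explicit geometry — the two quadrics $Q, Q'$ and the residuation of $L_2\cup L_3$ to $X$ — to write down the three quartic syzygies by hand (they come from the two Koszul-type relations on $Q\cdot(\text{stuff})$ and $Q'\cdot(\text{stuff})$ and one relation forced by $L_1\subset Q$) and verify directly that they generate, so that no degree-$4$ generator of the ideal survives and the resolution terminates as claimed.
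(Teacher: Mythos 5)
There are two genuine gaps here, and the first one undermines the computation on which the rest of your outline depends. You set $Z=L_1\cup(Q\cap Q^\prime)$, but Lemma~\ref{L:qcapqprim} gives $Q\cap Q^\prime=L_2\cup L_3\cup X$, so $L_1\cup(Q\cap Q^\prime)$ omits $L_4$ entirely and is a curve of degree $5$, not the degree-$6$ curve of the statement. The Mayer--Vietoris count then does not ``land at $4$'': with $\h^0(\sci_{L_1}(3))=16$, $\h^0(\sci_{Q\cap Q^\prime}(3))=8$, and the correction term coming from the length-$2$ scheme $L_1\cap Q^\prime$ (so a target of dimension at most $20-2=18$), you get $\h^0 \geq 16+8-18=6$, which is the right answer for the degree-$5$ curve you actually wrote down but not for $Z$. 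Repairing this forces you to carry $L_4$ as a third piece of the union, and the intersection terms are then no longer a single length-$\leq 2$ scheme on one line; the bookkeeping has to be redone from scratch.

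The second gap is the one you flag yourself: everything that makes the lemma nontrivial --- that $\sci_Z$ has no minimal generator in degree $4$, and that the first syzygy module is exactly $3\sco_\piii(-4)$ with no linear syzygy among the four cubics --- is deferred to ``write down the three quartic syzygies by hand and verify directly that they generate''. Neither $4$-regularity nor the equality $\h^0(\sci_Z(3))=4$ implies these facts; one would at least need surjectivity of $\tH^0(\sci_Z(3))\otimes W\ra\tH^0(\sci_Z(4))$ and an explicit rank computation on the syzygy matrix, and your outline supplies neither. The paper sidesteps all of this by liaison: choosing planes $H\supset L_1$ and $H^\prim\supset L_4$ through suitable auxiliary lines $\Lambda\subset Q$, $\Lambda^\prime\subset Q^\prime$, it links $Z$ inside the complete intersection of the two reducible cubic surfaces $Q\cup H^\prim$ and $Q^\prime\cup H$ to the residual curve $\Lambda\cup\Lambda^\prime\cup(H\cap H^\prim)$, a divisor of type $(2,1)$ on a smooth quadric and hence itself linked to a line by a $(2,2)$; Ferrand's theorem on resolutions under liaison (\cite[Prop.~2.5]{ps}) then yields the asserted resolution mechanically, with no need to identify generators or syzygies. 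I would recommend either adopting that route or committing to the explicit generator-and-syzygy computation, which is the actual content of the lemma.
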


\begin{proof}
Let us denote $L_1 \cup \ldots \cup L_4 \cup X$ by $Z$. Let $\Lambda \subset 
Q$ (resp., $\Lambda^\prime \subset Q^\prime$) be a line from the same ruling 
of $Q$ (resp., $Q^\prime$) as the lines from the support of $X$ but not 
intersecting $X$. Let $H$ (resp., $H^\prim$) be the plane spanned by $L_1$ and 
$\Lambda$ (resp., $L_4$ and $\Lambda^\prime$). One has$\, :$ 
\[
(Q \cup H^\prim) \cap (Q^\prime \cup H) = Z \cup \Lambda \cup \Lambda^\prime 
\cup (H \cap H^\prim)\, . 
\] 
One can choose $\Lambda$ and $\Lambda^\prime$ such that they are, moreover, 
disjoint. Both of them intersect the line $H \cap H^\prim$. It follows that 
$Z^\prim := \Lambda \cup \Lambda^\prime \cup (H \cap H^\prim)$ is a divisor of 
type $(2,1)$ on a nonsingular quadric surface (take a line $\Lambda^\secund$ 
disjoint from $\Lambda$ and $\Lambda^\prime$ but intersecting $H \cap H^\prim$) 
hence it is linked to a line by a complete intersection of type $(2,2)$. 
One can apply, now, D. Ferrand's result on resolutions under liaison (see  
\cite[Prop.~2.5]{ps}). 
\end{proof}

\begin{lemma}\label{L:hiiy(3)=0} 
Let $L_1, \ldots , L_5$ be mutually disjoint lines in $\piii$ such that their 
union $Y$ admits no $5$-secant. Then ${\fam0 H}^i(\sci_Y(3)) = 0$, 
$i = 0,\, 1$. 
\end{lemma}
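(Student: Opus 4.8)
The plan is to reduce both vanishings to the single assertion that no cubic surface of $\piii$ contains $Y$, and then to derive that assertion from the no‑$5$‑secant hypothesis by intersecting a hypothetical cubic through $Y$ with the quadric through three of the five lines.

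First, the numerology. Since $Y$ is a disjoint union of five lines, $\sco_Y(3)\simeq\bigoplus_{i=1}^{5}\sco_{L_i}(3)$, so $\tH^j(\sco_Y(3))=0$ for $j\ge 1$ and $\chi(\sco_Y(3))=20=\chi(\sco_\piii(3))$. From $0\to\sci_Y(3)\to\sco_\piii(3)\to\sco_Y(3)\to 0$ one gets $\tH^2(\sci_Y(3))\simeq\tH^1(\sco_Y(3))=0$, $\tH^3(\sci_Y(3))=\tH^3(\sco_\piii(3))=0$ and $\chi(\sci_Y(3))=0$; hence $\h^0(\sci_Y(3))=\h^1(\sci_Y(3))$, so it suffices to prove $\tH^0(\sci_Y(3))=0$, i.e.\ that no cubic surface contains $Y$.

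So suppose $S$ is a cubic surface with $Y\subseteq S$. Let $Q$ be the unique quadric surface containing $L_1\cup L_2\cup L_3$; it is smooth, because neither a quadric cone nor a reducible quadric can contain three pairwise disjoint lines. Let $R$ be the ruling of $Q$ to which $L_1,L_2,L_3$ belong, and $R'$ the other ruling. If $Q\subseteq S$, then the equation of $S$ is divisible by that of $Q$, so $S=Q\cup H$ with $H$ a plane; a plane contains at most one of the pairwise disjoint $L_i$, so at least four of them lie on $Q$, necessarily in the ruling $R$, and then the line of $R'$ through any point of the (nonempty) intersection of the fifth line with $Q$ meets all five $L_i$ — contradicting the hypothesis. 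If $Q\not\subseteq S$, then $S\cap Q$ is a divisor of type $(3,3)$ on $Q\simeq\pj\times\pj$ containing $L_1+L_2+L_3$, and since the first‑factor bidegree is already exhausted one has
\[
S\cap Q \;=\; L_1+L_2+L_3+M_1+M_2+M_3 ,
\]
with $M_1,M_2,M_3$ lines of the ruling $R'$, not necessarily distinct. As $L_4$ and $L_5$ lie on $S$ and are disjoint from $L_1,L_2,L_3$, the schemes $L_4\cap Q$ and $L_5\cap Q$ are contained in $M_1\cup M_2\cup M_3$; and neither $L_4$ nor $L_5$ lies on $Q$ (otherwise four of the $L_i$ lie in the ruling $R$ and one argues as in the first case), so $L_4\cap Q$ and $L_5\cap Q$ are subschemes of $M_1\cup M_2\cup M_3$ of length $2$ each. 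Of these four points, counted with multiplicity and lying on at most three lines, two lie on a common $M_j$; a line carrying two points of $L_4$ (resp.\ of $L_5$) would equal $L_4$ (resp.\ $L_5$) and hence lie on $Q$, which is excluded, so these two points come one from $L_4$ and one from $L_5$, whence $M_j$ meets $L_4$ and $L_5$ and — being of ruling $R'$ — also $L_1,L_2,L_3$. Thus $M_j$ is a $5$‑secant of $Y$, a contradiction. This proves $\tH^0(\sci_Y(3))=0$, and hence $\tH^1(\sci_Y(3))=0$.

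The only step requiring care is the book‑keeping in the last case when some of the $M_j$ coincide or when one of $L_4,L_5$ is tangent to $Q$: then ``two points on a common $M_j$'' may be a single point of multiplicity two, and one must check that it still yields a common transversal. In each such sub‑case it does — either directly (the line of $R'$ through the tangency point meets all five $L_i$), or because the coincidence of two $M_j$ forces the remaining lines into the already‑excluded situation where one of the $L_i$ lies on $Q$. (One can also bypass the cubic‑surface discussion altogether: by the same reasoning no four of the $L_i$ lie on a quadric, so Lemma~\ref{L:il1l4x} applies to $L_1\cup\cdots\cup L_4$ and gives $\h^0(\sci_{L_1\cup\cdots\cup L_4}(3))=4$ and $\h^1(\sci_{L_1\cup\cdots\cup L_4}(3))=0$; the sequence $0\to\sci_Y(3)\to\sci_{L_1\cup\cdots\cup L_4}(3)\to\sco_{L_5}(3)\to 0$ then reduces the lemma to the injectivity of restriction to $L_5$, i.e.\ once more to the fact that no cubic through $L_1,\dots,L_4$ contains $L_5$.)
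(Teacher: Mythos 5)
Your proof is correct and follows essentially the same route as the paper: reduce to $\tH^0(\sci_Y(3))=0$ via the count $\h^0(\sco_\piii(3))=20=\h^0(\sco_Y(3))$, then intersect a hypothetical cubic through $Y$ with the quadric containing $L_1\cup L_2\cup L_3$ and extract a $5$-secant from the residual lines of the other ruling. The only difference is bookkeeping at the end: the paper phrases the contradiction scheme-theoretically --- the length-$4$ scheme $\Gamma=Q\cap(L_4\cup L_5)$ is a subscheme of the divisor $M_1+M_2+M_3$ while each $M_j\cap\Gamma=M_j\cap(L_4\cup L_5)$ has length at most $1$ (otherwise $M_j$ is a $5$-secant, or equals $L_4$ or $L_5$ and hence lies on $Q$) --- which disposes in one stroke of the tangency and coincidence sub-cases that you defer to your final paragraph.
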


\begin{proof} 
Let $Q$ be the nonsingular quadric surface containing $L_1 \cup L_2 \cup L_3$. 
Since $Y$ has no 5-secant, none of the lines $L_4$ and $L_5$ is contained in 
$Q$ hence the scheme $\Gamma := Q \cap (L_4 \cup L_5)$ is 0-dimensional of 
length 4. 

Now, since $\h^0(\sco_\piii(3)) = 20 = \h^0(\sco_Y(3))$ it suffices to show 
that $\tH^0(\sci_Y(3)) = 0$. Assume that $Y$ is contained in a cubic surface 
$\Sigma$. $Q$ cannot be a component of $\Sigma$ because $L_4 \cup L_5$ is not 
contained in a plane. It follows that $\Sigma$ intersects $Q$ properly hence, 
as divisors on $Q$, 
\[
\Sigma \cap Q = L_1 + L_2 + L_3 + L_1^\prime + L_2^\prime + L_3^\prime 
\]   
where $L_1^\prime$, $L_2^\prime$, $L_3^\prime$ are (not necessarily distinct) lines 
from the other ruling of $Q$. One deduces that $\Gamma$ must be a subscheme 
of the (effective) divisor $L_1^\prime + L_2^\prime + L_3^\prime$. But $Y$ has no 
5-secant hence the scheme $L_i^\prime \cap \Gamma = L_i^\prime \cap 
(L_4 \cup L_5)$ is empty or consists of a simple point, $i = 1,\, 2,\, 3$. 
Since $\Gamma$ has length 4, one gets a \emph{contradiction}.    
\end{proof} 

\begin{lemma}\label{L:l5} 
Let $L_1, \ldots ,L_5$ be mutually disjoint lines in $\piii$. For $1 \leq i < 
j < l \leq 5$, let $Q_{ijl}$ be the unique quadric surface containing $L_i \cup 
L_j \cup L_l$. If $L_1 \cup \ldots \cup L_5$ admits no $5$-secant then, 
as schemes$\, :$ 
\[
Q_{125} \cap Q_{235} \cap Q_{345} = L_5 \cup \Gamma_2 \cup \Gamma_3 
\]
where $\Gamma_i$ is a subscheme of length $2$ of $L_i$, $i = 2,\, 3$. 
\end{lemma}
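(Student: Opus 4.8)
The plan is to determine $Q_{125}\cap Q_{235}$ by means of Lemma~\ref{L:qcapqprim} and then to intersect with $Q_{345}$, peeling off the line $L_5$, which lies on all three quadrics. Write $Z:=Q_{125}\cap Q_{235}\cap Q_{345}$, so that $\sci_Z=(q_{125},q_{235},q_{345})$ with $q_{125},q_{235},q_{345}$ equations of the respective quadrics. First I would extract from the no-$5$-secant hypothesis that no four of the five lines lie on a quadric surface: four mutually disjoint lines on a quadric force it to be smooth with the lines in one of its rulings, and a line of the other ruling through a point of the intersection of that quadric with the fifth line would then meet all five lines. In particular $Q_{125},Q_{235},Q_{345}$ are pairwise distinct, $L_3\not\subseteq Q_{125}$ and $L_2\not\subseteq Q_{345}$; put $\Gamma_3:=L_3\cap Q_{125}$ and $\Gamma_2:=L_2\cap Q_{345}$, subschemes of length $2$ of $L_3$, resp. $L_2$. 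Since $q_{235},q_{345}\in\sci_{L_3}$ but $q_{125}\notin\sci_{L_3}$, one has $\sci_Z+\sci_{L_3}=(q_{125})+\sci_{L_3}$, i.e. $Z\cap L_3=\Gamma_3$ as schemes; similarly $Z\cap L_2=\Gamma_2$.

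Applying Lemma~\ref{L:qcapqprim} to the mutually disjoint lines $L_1,L_2,L_5,L_3$ (so that the two ``middle'' ones, $L_2$ and $L_5$, are exactly the lines common to $Q_{125}$ and $Q_{235}$), whose union is not on a quadric by the preceding observation, I obtain $Q_{125}\cap Q_{235}=L_2\cup L_5\cup X$, where $X=\Lambda\cup\Lambda'$ is a union of two lines of the ruling of $Q_{125}$ complementary to $L_1,L_2,L_5$ and passing through the points of $\Gamma_3$, possibly coinciding into a double line. Each of $\Lambda,\Lambda'$ thus meets $L_1,L_2,L_3,L_5$, while neither meets $L_4$ — otherwise it would be a $5$-secant of $Y$ — and consequently neither lies on $Q_{345}$: on $Q_{345}$ the lines $L_3,L_4,L_5$ form one ruling, so any line of $Q_{345}$ meeting $L_3$ and $L_5$ also meets $L_4$.

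Now $C:=Q_{125}\cap Q_{235}$ is a complete intersection curve, hence Cohen--Macaulay of pure dimension $1$ and degree $4$, and $C=L_5+C'$ as divisors on the smooth quadric $Q_{125}$, with $C'=L_2\cup X$ of degree $3$ and $\sci_C=\sci_{L_5}\cap\sci_{C'}$. Since $L_5\subseteq Q_{345}$, the modular law gives $\sci_Z=\sci_C+(q_{345})=\sci_{L_5}\cap(\sci_{C'}+(q_{345}))$, i.e. $Z=L_5\cup W$ with $W:=C'\cap Q_{345}$. As no component of $C'$ lies on $Q_{345}$, the scheme $W$ is finite, of length $6$, and supported in $L_2\cup L_3\cup L_5$: indeed $\Lambda\cap Q_{345}$ consists exactly of the two points $\Lambda\cap L_3\in L_3$ and $\Lambda\cap L_5\in L_5$, and similarly for $\Lambda'$. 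Near $\Gamma_2\subseteq L_2$ the only component of $C'$ is $L_2$ — the points $\Lambda\cap L_2,\Lambda'\cap L_2$ being off $Q_{345}$ — so $W=\Gamma_2$ there. I would then show that near $L_3$ the scheme $W$ is a subscheme of $L_3$, whence $W=Z\cap L_3=\Gamma_3$ there, and that near $L_5$ it is a subscheme of $L_5$; together with $W=\Gamma_2$ near $L_2$, this yields $Z=L_5\cup W=L_5\cup\Gamma_2\cup\Gamma_3$, as asserted.

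The main obstacle is the two local claims in the last step. When $\Lambda\neq\Lambda'$ the four intersection points of $X$ with $L_3\cup L_5$ are transverse and $W$ is reduced there, visibly inside $L_3$, resp. $L_5$. When $X=2\Lambda$ — which occurs precisely when $L_3$ is tangent to $Q_{125}$ — one must compute $2\Lambda\cap Q_{345}$ near a point $r$ of $X\cap(L_3\cup L_5)$; working inside the smooth surface $Q_{125}$, where $2\Lambda$ is a divisor and $Q_{345}$ cuts the curve $D:=Q_{125}\cap Q_{345}$ transversally to $\Lambda$ at $r$, one finds $2\Lambda\cap Q_{345}$ has length $2$ at $r$ with the same tangent line as $D$; since a length-$2$ scheme has at most $1$-dimensional Zariski tangent space, $Q_{125}$ and $Q_{345}$ must be transverse at $r$ and $D$ smooth there, so $D=L_5$ near $r$ when $r\in L_5$, while for $r\in L_3$ the relation $D\cap L_3=\Gamma_3$ (length $2$) forces $D$ and $L_3$ tangent at $r$ — in both cases $2\Lambda\cap Q_{345}$ is the first-order neighbourhood of $r$ in $L_5$, resp. $L_3$. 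One should also confirm that no embedded point of $Z$ survives along $L_5$, which follows once $W$ is known to lie in $L_5$ near its points on $L_5$.
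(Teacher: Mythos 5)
Your proof is correct and rests on the same key input as the paper's: the no-$5$-secant hypothesis implies no four of the lines lie on a quadric, and Lemma~\ref{L:qcapqprim} then identifies $Q_{125}\cap Q_{235}$ as $L_2\cup L_5\cup X$ with $X$ supported on the $4$-secants of $L_1\cup L_2\cup L_3\cup L_5$. You diverge in the last step. The paper applies Lemma~\ref{L:qcapqprim} a second time to write $Q_{235}\cap Q_{345}=L_3\cup L_5\cup X'$ and computes $Z$ as the intersection of these two complete-intersection curves, the decisive point being $X\cap X'=\emptyset$ (again from the no-$5$-secant hypothesis); you instead intersect the single curve $L_2\cup L_5\cup X$ with the quadric $Q_{345}$, peeling off $L_5$ by the modular law and analysing the residual finite scheme $W=C'\cap Q_{345}$ locally. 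The two computations yield the same answer, and the facts you need pointwise (e.g.\ that $\Lambda\cap L_2\notin Q_{345}$, which you get from the length count $\operatorname{length}(\Lambda\cap Q_{345})=2$) are exactly what the paper packages as $X\cap X'=\emptyset$. What your version buys is explicitness about the scheme structure: you treat the non-reduced case $X=2\Lambda$ and the absence of embedded points of $Z$ along $L_5$ in detail, whereas the paper compresses all of this into ``the result follows.'' The cost is length; the paper's symmetric curve-against-curve formulation is shorter because both factors have already been decomposed by the same lemma.
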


\begin{proof}
Since $L_1 \cup \ldots \cup L_5$ admits no 5-secant the union of any four of 
the five lines is not contained in a quadric surface. 
According to Lemma~\ref{L:qcapqprim} one has, as schemes$\, :$ 
\[
Q_{125} \cap Q_{235} = L_2 \cup L_5 \cup X\, ,\  \  
Q_{235} \cap Q_{345} = L_3 \cup L_5 \cup X^\prim 
\]
where $X$ (resp., $X^\prim$) is the divisorial sum on $Q_{125}$ (resp., 
$Q_{235}$) of the two (maybe coinciding) 4-secants of $L_1 \cup L_2 \cup L_3 
\cup L_5$ (resp., $L_2 \cup L_3 \cup L_4 \cup L_5$). Since $L_1 \cup \ldots 
\cup L_5$ admits no 5-secant one has $X \cap X^\prim = \emptyset$. 
Putting $\Gamma_2 := L_2 \cap X^\prim$ and $\Gamma_3 = L_3 \cap X$, the result 
follows.  
\end{proof} 

\begin{remark}\label{R:theta} 
We describe, here, explicitly some special morphisms 
$\Omega_\piii(1) \ra \sco_\piii(1)$ that we shall use in the proof of 
Lemma~\ref{L:omega(1)iy(3)} and record their basic properties. 

\vskip2mm 

(i) We denote by $W$ the space $W := \tH^0(\sco_\piii(1))$ of linear forms on 
$\piii$. Consider the Koszul complex$\, :$ 
\[
\cdots \lra \sco_\piii(-1) \otimes_k \overset{2}{\textstyle \bigwedge} W 
\overset{\displaystyle \delta_2}{\lra}  
\sco_\piii \otimes_k W \overset{\displaystyle \delta_1}{\lra} \sco_\piii(1) 
\lra 0 
\] 
corresponding the evaluation morphism $\delta_1$ and recall that 
$\Ker \delta_1 \simeq \Omega_\piii(1)$. One gets an epimorphism $\sco_\piii 
\otimes_k \overset{2}{\bigwedge} W \ra \Omega_\piii(2)$ which induces an 
isomorphism $\overset{2}{\bigwedge} W \izo \tH^0(\Omega_\piii(2))$. Recall, 
also, that $\delta_2(1) : \sco_\piii \otimes_k \overset{2}{\bigwedge} W \ra 
\sco_\piii(1) \otimes_k W$ maps $h \wedge h^\prime$ to $h \otimes h^\prime - 
h^\prime \otimes h$. 

\vskip2mm 

(ii) Let $L_1$, $L_2$ be disjoint lines in $\piii$. They correspond to a 
decomposition $W = W_1 \oplus W_2$, where $W_i = \tH^0(\sci_{L_i}(1))$, 
$i = 1,\, 2$. Consider the linear automorphism $\alpha_{12} := 
(- \text{id}_{W_1}) \oplus \text{id}_{W_2} : W \ra W$ and let $\theta_{12} : 
\Omega_\piii(1) \ra \sco_\piii(1)$ be the composite morphism$\, :$ 
\[
\Omega_\piii(1) \lra \sco_\piii \otimes_k W 
\xra{\displaystyle \alpha_{12}} \sco_\piii \otimes_k W 
\overset{\displaystyle \delta_1}{\lra} \sco_\piii(1)\, . 
\]   
If $h_0,\, h_1$ (resp., $h_2,\, h_3$) is a $k$-basis of $W_1$ (resp., $W_2$) 
then, by what has been said in (i)$\, :$  
\begin{gather*} 
\theta_{12}(1)(h_0 \wedge h_1) = 0\, ,\  \  \theta_{12}(1)(h_0 \wedge h_2) = 
2h_0h_2\, ,\  \  \theta_{12}(1)(h_0 \wedge h_3) = 2h_0h_3\\ 
\theta_{12}(1)(h_1 \wedge h_2) = 2h_1h_2\, ,\  \  \theta_{12}(1)(h_1 \wedge h_3) 
= 2h_1h_3\, ,\  \  \theta_{12}(1)(h_2 \wedge h_3) = 0\, .
\end{gather*} 
One deduces that $\text{Im}\, \theta_{12} = \sci_{L_1 \cup L_2}(1)$ and that one 
has an exact sequence$\, :$ 
\begin{equation}\label{E:kertheta} 
0 \lra 2\sco_\piii(-1) \lra \Omega_\piii(1) \xra{\displaystyle \theta_{12}} 
\sci_{L_1 \cup L_2}(1) \lra 0 
\end{equation}
where the left morphism is defined by the global sections $h_0 \wedge h_1$ and 
$h_2 \wedge h_3$ of $\Omega_\piii(2)$. 

\vskip2mm 

(iii) Consider a third line $L_3$ disjoint from $L_1$ and $L_2$. One can choose 
a $k$-basis $h_0,\, h_1$ (resp., $h_2,\, h_3$) of $W_1$ (resp., $W_2$) such 
that $h := h_0 + h_2$, $h^\prime := h_1 + h_3$ is a $k$-basis of 
$\tH^0(\sci_{L_3}(1))$. Then 
\[
\theta_{12}(1)(h \wedge h^\prime) = 2h_0h_3 - 2h_1h_2 
\]
is an equation of the unique quadric surface $Q$ containing $L_1 \cup L_2 \cup 
L_3$. The exact sequence \eqref{E:kertheta} induces an exact sequence$\, :$ 
\begin{equation}\label{E:3o(-1)omega(1)} 
0 \lra 3\sco_\piii(-1) \lra \Omega_\piii(1) \lra \sci_{L_1 \cup L_2,Q}(1) \lra 0 
\end{equation}
where the left morphism is defined by $h_0 \wedge h_1$, $h_2 \wedge h_3$ and 
$h \wedge h^\prime$. 
\end{remark}

\begin{remark}\label{R:en} 
We recall here, for the reader's convenience, the definition of a particular 
case of the \emph{Eagon-Northcott complex} and its basic property. 
Let $\phi : E \ra F$ be a morphism of vector bundles on an nonsingular 
quasi-projective variety $X$. Assume that $E$ has rank $n$ and $F$ has rank 
$n-1$. Then the Eagon-Northcott complex associated to $\phi$ is the 
complex$\, :$ 
\[
0 \lra F^\vee \otimes \overset{n}{\textstyle \bigwedge} E 
\overset{\displaystyle d_2}{\lra} \overset{n-1}{\textstyle \bigwedge} E 
\xra{\overset{n-1}{\bigwedge} {\displaystyle \phi}} 
\overset{n-1}{\textstyle \bigwedge} F 
\]
with $d_2$ defined by$\, :$ 
\[
d_2(f \otimes e_1 \wedge \ldots \wedge e_n) = 
{\textstyle \sum}_{i = 1}^n (-1)^{i-1} f(\phi(e_i)) e_1 \wedge \ldots \wedge 
{\widehat{e_i}} \wedge \cdots \wedge e_n\, . 
\]
The wedge product $E \times \overset{n-1}{\bigwedge} E \ra 
\overset{n}{\bigwedge} E$ induces an isomorphism$\, :$ 
\[
\overset{n-1}{\textstyle \bigwedge} E \simeq \sch om_{\sco_\piii}(E, 
\overset{n}{\textstyle \bigwedge} E) \simeq E^\vee \otimes 
\overset{n}{\textstyle \bigwedge} E 
\]
and, modulo this identification, $d_2$ can be identified with 
$\phi^\vee \otimes \text{id}_{\overset{n}{\bigwedge} E}$. 

It is a basic fact that if the degeneracy locus of $\phi$ has codimension 2 
in $X$ (or it is empty) then the Eagon-Northcott complex is exact.  
\end{remark}

\begin{lemma}\label{L:degeneracy} 
Under the hypothesis and with the notation from Lemma~\ref{L:qcapqprim}, 
consider, for $1 \leq i < j \leq 4$, the morphism $\theta_{ij} : \Omega_\piii(1) 
\ra \sco_\piii(1)$ with ${\fam0 Im}\, \theta_{ij} = \sci_{L_i \cup L_j}(1)$, defined 
in Remark~\ref{R:theta}(ii). Let us, also, denote by $Q_{ijl}$ the quadric 
surface containing $L_i \cup L_j \cup L_l$, $i < j < l$. Then$\, :$ 

\emph{(a)} The degeneracy scheme of$\, :$ 
\[
(\theta_{12},\, \theta_{34})^{\fam0 t} : \Omega_\piii(1) \lra 2\sco_\piii(1)  
\] 
is $L_1 \cup \ldots \cup L_4 \cup X$. 

\emph{(b)} The degeneracy scheme of$\, :$ 
\[
(\theta_{12},\, \theta_{23},\, \theta_{34})^{\fam0 t} : \Omega_\piii(1) \lra 
3\sco_\piii(1)  
\] 
is $Q_{123} \cup Q_{234}$.    
\end{lemma}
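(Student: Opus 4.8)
The plan is to analyse both morphisms fibrewise and then determine the scheme structure of the degeneracy loci. We use the identification $\Omega_\piii(1)|_p\cong W_p$, where $W_p:=\tH^0(\sci_p(1))\subseteq W$ is the space of linear forms vanishing at $p$, coming from the Euler sequence of Remark~\ref{R:theta}(i); under it $\theta_{ij}|_p\colon W_p\to W/W_p$ is $v\mapsto\alpha_{ij}(v)\bmod W_p$, so that $\ker(\theta_{ij}|_p)=(W_p\cap W_i)\oplus(W_p\cap W_j)$. For $p\notin L_i$ one has $W_p\cap W_i=k\ell_i$, where $\ell_i$ is an equation of the plane $\langle L_i,p\rangle$; hence $\theta_{ij}|_p=0$ exactly for $p\in L_i\cup L_j$, and for $p\notin L_i\cup L_j$ the $2$-plane $\ker(\theta_{ij}|_p)=\langle\ell_i,\ell_j\rangle\subseteq W_p$ coincides with $\tH^0(\sci_M(1))$, $M$ being the unique line through $p$ meeting $L_i$ and $L_j$. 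We shall also use that a line meeting three mutually disjoint lines $L_i,L_j,L_l$ lies on $Q_{ijl}$ (it meets $Q_{ijl}$ in at least three points), so that a line through $p$ meeting all three exists iff $p\in Q_{ijl}$.

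For part (b) (which we treat first): since $\Omega_\piii(1)$ and $3\sco_\piii(1)$ have the same rank, the degeneracy scheme of $(\theta_{12},\theta_{23},\theta_{34})^{\mathrm t}$ is the zero scheme of its determinant, a global section of $\sch om(\overset{3}{\textstyle\bigwedge}\Omega_\piii(1),\overset{3}{\textstyle\bigwedge}3\sco_\piii(1))\cong\sco_\piii(4)$. For $p\notin L_1\cup\ldots\cup L_4$ the determinant vanishes at $p$ iff $\ker(\theta_{12}|_p)\cap\ker(\theta_{23}|_p)\cap\ker(\theta_{34}|_p)\neq 0$; since $\langle L_i,p\rangle\neq\langle L_j,p\rangle$ when $i\neq j$, a short computation with the three $2$-planes $\langle\ell_1,\ell_2\rangle,\langle\ell_2,\ell_3\rangle,\langle\ell_3,\ell_4\rangle$ of the $3$-space $W_p$ shows that this intersection is nonzero iff $\ell_1,\ell_2,\ell_3$ are linearly dependent or $\ell_2,\ell_3,\ell_4$ are, i.e.\ iff there is a line through $p$ meeting $L_1,L_2,L_3$ or one meeting $L_2,L_3,L_4$, i.e.\ iff $p\in Q_{123}\cup Q_{234}$. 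As each $L_i$ lies in $Q_{123}$ or $Q_{234}$ and a point outside both quadrics is a non-degeneracy point, the determinant is a nonzero quartic form vanishing exactly on $Q_{123}\cup Q_{234}$. Three mutually disjoint lines lie on a unique quadric, which is necessarily smooth, hence irreducible; so $Q_{123}$ and $Q_{234}$ are distinct irreducible quadrics, the quartic is the product of their equations up to a scalar, and its zero scheme is the scheme-theoretic union $Q_{123}\cup Q_{234}$.

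For part (a): the fibrewise discussion shows that $(\theta_{12},\theta_{34})^{\mathrm t}|_p$ has rank $\leq 1$ exactly when $p$ lies on one of $L_1,\ldots,L_4$ or $\ker(\theta_{12}|_p)=\ker(\theta_{34}|_p)$, i.e.\ there is a line through $p$ meeting all four lines; by the definition of $X$ in Lemma~\ref{L:qcapqprim} (cf.\ the proof of Lemma~\ref{L:l5}), these transversals are the two possibly coinciding lines whose divisorial sum on $Q_{123}$ is $X$. Hence the degeneracy scheme $D$ is supported on $L_1\cup\ldots\cup L_4\cup X$, so $\mathrm{codim}\,D=2$ and the Eagon--Northcott complex of $(\theta_{12},\theta_{34})^{\mathrm t}$ (Remark~\ref{R:en} with $n=3$) is exact:
\[
0\lra 2\sco_\piii(-2)\lra\overset{2}{\textstyle\bigwedge}\Omega_\piii(1)\lra\sco_\piii(2)\, ,
\]
the image of the last map being $\sci_D(2)$. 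Since $\overset{2}{\textstyle\bigwedge}\Omega_\piii(1)\cong\sct_\piii(-2)$ has projective dimension $1$ (Euler sequence), splicing yields a resolution $0\to 3\sco_\piii(-4)\to 4\sco_\piii(-3)\to\sci_D\to 0$; in particular $\sci_D$ is generated by four cubics, $\tH^1(\sci_D(3))=0$, $\h^0(\sci_D(3))=4$, and $D$ has the same Hilbert polynomial $6n-2$ as $Z:=L_1\cup\ldots\cup L_4\cup X$ (cf.\ Lemma~\ref{L:il1l4x}). Moreover, since $\mathrm{Im}\,\theta_{12}=\sci_{L_1\cup L_2}(1)$ and $\mathrm{Im}\,\theta_{34}=\sci_{L_3\cup L_4}(1)$, every $2\times 2$ minor of $(\theta_{12},\theta_{34})^{\mathrm t}$ lies in $\sci_{L_1\cup\ldots\cup L_4}(2)$, so $D\supseteq L_1\cup\ldots\cup L_4$ as schemes and $\sci_D(3)\subseteq\sci_{L_1\cup\ldots\cup L_4}(3)$.

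It remains to prove $D=Z$. Assume first that the two transversals $L,L^\prime$ are distinct; then $Z$ is the reduced union of six lines with $\h^0(\sci_Z(3))=4$ by Lemma~\ref{L:il1l4x}, and any cubic containing $L_1\cup\ldots\cup L_4$ meets each transversal in four distinct points, hence contains it, so $\sci_{L_1\cup\ldots\cup L_4}(3)=\sci_Z(3)$. Then $\sci_D(3)\subseteq\sci_Z(3)$, and equality of dimensions forces $\sci_D(3)=\sci_Z(3)$; as both ideals are generated in degree $3$, $D=Z$. The case of a double transversal follows by specialisation: $D$ and $Z$ define flat families of curves over the irreducible (smooth, hence reduced) parameter variety of admissible ordered quadruples of lines---the Betti numbers being constant by the above and by Lemma~\ref{L:il1l4x}---agreeing on the dense open set where the transversals are distinct; since the Hilbert scheme is separated, they agree everywhere. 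The main obstacle is exactly this step, passing from the set-theoretic picture to the precise scheme structure of $D$ (in particular the non-reduced case $X=2L$); the fibrewise computations and part (b) are routine.
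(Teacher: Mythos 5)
Your proof of part (b) is correct and takes a genuinely more elementary route than the paper: you compute $\ker(\theta_{ij}|_p)=(W_p\cap W_i)\oplus(W_p\cap W_j)$ fibrewise, identify the set-theoretic vanishing locus of the $4\times 4$ determinant with $Q_{123}\cup Q_{234}$, and invoke irreducibility of the two quadrics to pin down the quartic as $q_{123}q_{234}$ up to a scalar; the paper instead shows divisibility of the determinant by each quadric equation via the exact sequences \eqref{E:kertheta} and \eqref{E:3o(-1)omega(1)} and uses an auxiliary fifth line to see that the determinant is not identically zero. For part (a), your derivation of the resolution $0\to 3\sco_\piii(-4)\to 4\sco_\piii(-3)\to\sci_D\to 0$ from the Eagon--Northcott complex and the Euler sequence, the inclusion $\sci_D\subseteq\sci_{L_1\cup\ldots\cup L_4}$, and the case of two distinct transversals are all correct and close in spirit to the paper's argument.

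The gap is exactly where you flag it, the case $X=2L$, and the specialisation argument as written does not close it. To invoke separatedness of the Hilbert scheme you need two \emph{flat families} over the parameter space $S$; for the degeneracy schemes this is fine (Fitting ideals commute with base change and the fibrewise Hilbert polynomial is constant over the reduced base), but for the $Z$'s you never exhibit a closed subscheme of $\piii\times S$ whose fibre over \emph{every} $s$ is $Z_s$, including the non-reduced fibres. Constancy of the Betti numbers of the individual fibres (Lemma~\ref{L:il1l4x}) is a statement about the fibres, not a construction of the family; intersections of ideal sheaves do not commute with restriction to fibres, and the scheme-theoretic closure of the family over the locus of distinct transversals has as its special fibre precisely the flat limit --- whose identification with $L_1\cup\ldots\cup L_4\cup 2L$ is essentially what you are trying to prove. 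Fortunately the gap closes with ingredients you already have, and this is in effect what the paper does: in both cases $\sci_D(3)$ and $\sci_Z(3)$ are globally generated with $\h^0=4$ (the first from your resolution of $\sci_D$, the second from Lemma~\ref{L:il1l4x}), and both are subsheaves of $\sci_{L_1\cup\ldots\cup L_4}(3)$, which also has $\h^0=4$ because four mutually disjoint lines not lying on a quadric impose independent conditions on cubics. Hence $\tH^0(\sci_D(3))=\tH^0(\sci_Z(3))$ as subspaces of $\tH^0(\sco_\piii(3))$, and global generation gives $\sci_D=\sci_Z$ with no case distinction on $X$.
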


\begin{proof} 
(a) Let us denote the morphism from the statement by $\phi$. 

\vskip2mm 

\noindent
{\bf Claim.}\quad $\text{Supp}\, \Cok \phi \subseteq L_1 \cup \ldots \cup L_4 
\cup X$. 

\vskip2mm 

\noindent
\emph{Indeed}, applying the Snake Lemma to the diagram$\, :$ 
\[
\begin{CD}
0 @>>> 0 @>>> \Omega_\piii(1) @= \Omega_\piii(1) @>>> 0\\
@. @VVV @VV{\displaystyle \phi}V @VV{\displaystyle \theta_{12}}V\\
0 @>>> \sco_\piii(1) @>{\displaystyle \text{incl}_2}>> 2\sco_\piii(1) 
@>{\displaystyle \text{pr}_1}>> \sco_\piii(1) @>>> 0 
\end{CD}
\]
one gets an exact sequence$\, :$ 
\[
2\sco_\piii(-1) \overset{\displaystyle \partial}{\lra} \sco_\piii(1) \lra 
\Cok \phi \lra \sco_{L_1 \cup L_2}(1) \lra 0\, . 
\]
If $L_1$ (resp., $L_2$) has equations $h_0 = h_1 = 0$ (resp., $h_2 = h_3 = 0$) 
then $\partial$ is defined by $\theta_{34}(1)(h_0 \wedge h_1)$ and 
$\theta_{34}(1)(h_2 \wedge h_3)$. By Remark~\ref{R:theta}(iii), 
$\theta_{34}(1)(h_0 \wedge h_1)$ (resp., $\theta_{34}(1)(h_2 \wedge h_3)$) is an 
equation of $Q_{134}$ (resp., $Q_{234}$). Since, by Lemma~\ref{L:qcapqprim}, 
$Q_{134} \cap Q_{234} = L_3 \cup L_4 \cup X$ the claim follows. 

\vskip2mm 

Now, by Remark~\ref{R:en}, the Eagon-Northcott complex associated to $\phi$ is 
an exact sequence$\, :$ 
\begin{equation}\label{E:en} 
0 \lra 2\sco_\piii(-2) \xra{\displaystyle (\theta_{12}^\vee(-1), 
\theta_{34}^\vee(-1))} \Omega_\piii^2(2) \xra{\overset{2}{\bigwedge} 
{\displaystyle \phi}} \sco_\piii(2) 
\end{equation}
(recall that $\Omega_\piii^2(2) \simeq \text{T}_\piii(-2)$). One deduces that 
the map$\, :$ 
\[
\tH^0((\overset{2}{\textstyle \bigwedge} \phi)(1)) : \tH^0(\Omega_\piii^2(3)) 
\lra \tH^0(\sco_\piii(3)) 
\]
is injective hence its image has dimension 4. On the other hand, $\phi$ 
degenerates along $L_1 \cup \ldots \cup L_4$ hence 
$\text{Im}(\overset{2}{\bigwedge} \phi) \subseteq 
\sci_{L_1 \cup \ldots \cup L_4}(2)$. Since 
\[
\tH^0(\sci_{L_1 \cup \ldots \cup L_4}(3)) = 
\tH^0(\sci_{L_1 \cup \ldots \cup L_4 \cup X}(3)) 
\]
one deduces, from Lemma~\ref{L:il1l4x}, that the image of 
$(\overset{2}{\bigwedge} \phi)(1)$ is $\sci_{L_1 \cup \ldots \cup L_4 \cup X}(3)$. 

\vskip2mm 

(b) Let us denote by $\psi$ the morphism from the statement.  

\vskip2mm 

\noindent
{\bf Claim.}\quad $\theta_{12},\, \theta_{23},\, \theta_{34} \in 
\text{Hom}_{\sco_\piii}(\Omega_\piii(1), \sco_\piii(1))$ \emph{are linearly 
independent}.  

\vskip2mm 

\noindent
\emph{Indeed}, take a fifth line $L_5$, disjoint form each of the lines 
$L_1, \ldots , L_4$ and such that $L_1 \cup \ldots \cup L_5$ admits no 
5-secant. Let $h = h^\prime = 0$ be equations of $L_5$. Then, according to 
Remark~\ref{R:theta}(iii), $\theta_{ij}(1)(h \wedge h^\prime)$ is an equation 
of $Q_{ij5}$, for $1 \leq i < j \leq 4$. Since $Q_{345}$ does not contain $L_2$, 
the equation of $Q_{345}$ cannot be a linear combination of the equations of 
$Q_{125}$ and $Q_{235}$. It follows that $\theta_{12}(1)(h \wedge h^\prime)$, 
$\theta_{23}(1)(h \wedge h^\prime)$ and $\theta_{34}(1)(h \wedge h^\prime)$ are 
linearly independent whence the claim. 

\vskip2mm 

Now, using the exact sequence \eqref{E:en}, one deduces that 
\[
(\theta_{12}^\vee,\, \theta_{23}^\vee,\, \theta_{34}^\vee) : 3\sco_\piii(-1) \lra 
\text{T}_\piii(-1) \simeq \Omega_\piii^2(3)  
\] 
does not degenerate on the whole $\piii$ hence the same is true for $\psi$. 
It follows that the degeneracy scheme of $\psi$ is a surface $\Sigma$ of 
degree 4 in $\piii$. But the exact sequences \eqref{E:kertheta} and 
\eqref{E:3o(-1)omega(1)} from Remark~\ref{R:theta} show that 
$(\theta_{12},\, \theta_{23})^{\text{t}} : \Omega_\piii(1) \ra 2\sco_\piii(1)$ 
degenerates along $Q_{123}$. Analogously, $(\theta_{23},\, \theta_{34})^{\text{t}} :
\Omega_\piii(1) \ra 2\sco_\piii(1)$ degenerates along $Q_{234}$. It follows that 
$\Sigma = Q_{123} \cup Q_{234}$.  
\end{proof}

\begin{lemma}\label{L:lprimcapz} 
Let $Z$ be a locally Cohen-Macaulay curve in $\piii$, supported on a line $L$, 
and let $Q \supset L$ be a nonsigular quadric surface. If, for every line 
$L^\prime$ from the other ruling of $Q$, one has $L^\prime \cap Z = $ one simple 
point then $Z = L$ as schemes. 
\end{lemma}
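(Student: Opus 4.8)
The plan is to prove first that $Z \cap Q = L$ as schemes, and then to derive a contradiction from the assumption $Z \neq L$ by a short computation with the conormal bundle of $L$.

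For the first step, set $W := Z \cap Q$ (scheme-theoretic intersection). Since $Z$ is supported on $L$ and $L \subseteq Q$, we have $L \subseteq W$, so $W$ is a one-dimensional scheme with support $L$. Let $p : Q \ra \pj$ be the morphism whose fibres are the lines of the ruling of $Q$ that does not contain $L$, and put $L^\prime_t := p^{-1}(t)$ for $t \in \pj$. The restriction $p|_W : W \ra \pj$ is proper, and it is quasi-finite because no $L^\prime_t$ is contained in $W$ (as $L^\prime_t \not\subseteq L$); hence it is finite. Its scheme-theoretic fibre over $t$ is $W \cap L^\prime_t = Z \cap L^\prime_t$, which by hypothesis has length $1$. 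Consequently $(p|_W)_* \sco_W$ is a coherent sheaf on the reduced scheme $\pj$ whose fibres all have dimension $1$, hence it is invertible; moreover the canonical map $\sco_\pj \ra (p|_W)_* \sco_W$ induces an isomorphism on each fibre (both fibres being the $1$-dimensional $k$-algebra $k$), so it is an isomorphism. Therefore $p|_W : W \izo \pj$ is an isomorphism; in particular $W$ is reduced, and since its support is $L$ we conclude $W = L$.

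For the second step, assume $Z \neq L$ and let $q \in \tH^0(\sco_\piii(2))$ be an equation of $Q$. The ideal sheaf of $W = Z \cap Q = L$ in $Z$ is the image of the multiplication morphism $\sco_Z(-2) \xra{\, q\, } \sco_Z$; but that ideal sheaf is the ideal sheaf $\mathcal{N}$ of $L$ in $Z$, which — because $Z$ is supported on $L$ — coincides with the nilradical of $\sco_Z$. Hence $\mathcal{N}$ is a quotient of $\sco_Z(-2)$, so $\mathcal{N}/\mathcal{N}^2$ is a quotient of $(\sco_Z/\mathcal{N})(-2) = \sco_L(-2)$. On the other hand $\mathcal{N} = \sci_L/\sci_Z$, whence $\mathcal{N}/\mathcal{N}^2 = \sci_L/(\sci_L^2 + \sci_Z)$ is a quotient of $\sci_L/\sci_L^2 \simeq \sco_L(-1) \oplus \sco_L(-1)$. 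Now, as $Z$ is locally Cohen-Macaulay of dimension $1$, the sheaf $\sco_Z$ has no nonzero subsheaf supported in dimension $0$; since $\mathcal{N} \neq 0$, it follows that the stalk $\mathcal{N}_\eta$ at the generic point $\eta$ of $L$ is nonzero, and being a nilpotent ideal in the local ring $\sco_{Z,\eta}$ it satisfies $\mathcal{N}_\eta^2 \neq \mathcal{N}_\eta$ by Nakayama, i.e. $(\mathcal{N}/\mathcal{N}^2)_\eta \neq 0$. Thus the surjection $\sco_L(-2) \ra \mathcal{N}/\mathcal{N}^2$ is an isomorphism at $\eta$, so its kernel — a torsion subsheaf of the invertible sheaf $\sco_L(-2)$ — is zero, and $\mathcal{N}/\mathcal{N}^2 \simeq \sco_L(-2)$. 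But then $\sco_L(-1) \oplus \sco_L(-1)$ would surject onto $\sco_L(-2)$, which is absurd because $\mathrm{Hom}_{\sco_L}(\sco_L(-1) \oplus \sco_L(-1), \sco_L(-2)) = \tH^0(\sco_L(-1))^{\oplus 2} = 0$. This contradiction gives $Z = L$.

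The step I expect to be the main obstacle is the first one: converting the numerical hypothesis (length $1$ for \emph{every} line $L^\prime$ of the second ruling) into the qualitative statement that the scheme $Z \cap Q$ is reduced. Once that is available, the rest reduces to the elementary facts that $\sci_L/\sci_L^2 \simeq \sco_L(-1)^{\oplus 2}$ for a line $L \subset \piii$ and that $\tH^0(\sco_\pj(-1)) = 0$.
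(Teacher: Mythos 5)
Your proof is correct, but it takes a genuinely different route from the paper's. The paper argues by contradiction using a single, well-chosen ruling line: if $\text{deg}\, Z \geq 2$ then $Z$ contains a double structure $X$ on $L$, whose ideal has the standard form $(ax_3-bx_2,\, x_2^2,\, x_2x_3,\, x_3^2)$ with $a,\, b \in k[x_0,x_1]$ coprime forms of the same degree (coordinates chosen so that $L$ is $x_2=x_3=0$ and $Q$ is $x_0x_3-x_1x_2=0$); parametrizing the lines of the second ruling as $(u:t) \mapsto (c_0u:c_1u:c_0t:c_1t)$ and choosing $(c_0,c_1)$ with $a(c_0,c_1)c_1-b(c_0,c_1)c_0=0$, the generator $ax_3-bx_2$ vanishes identically on the corresponding line $L^\prime$, so $L^\prime \cap X$ is a double point, contradicting the hypothesis. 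You instead use the hypothesis for \emph{all} ruling lines at once to establish the stronger intermediate statement $Z \cap Q = L$ as schemes (constant fibre length $1$ over the reduced base $\pj$ forces $(p|_W)_*\sco_W$ to be the trivial line bundle), and you then replace the explicit classification of double lines by a conormal-module argument: if $Z \neq L$, the nilradical $\mathcal{N}$ of $\sco_Z$ would be generated by the single quadric $q$, forcing $\mathcal{N}/\mathcal{N}^2 \simeq \sco_L(-2)$ (first at the generic point via Nakayama, then globally by torsion-freeness of $\sco_L(-2)$), which is incompatible with $\mathcal{N}/\mathcal{N}^2$ being a quotient of $\sci_L/\sci_L^2 \simeq 2\sco_L(-1)$. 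Every step checks out, including the base-change identification of the fibres of $(p|_W)_*\sco_W$ with $\tH^0(\sco_{W_t})$ for the finite morphism $p|_W$. What your approach buys is independence from the classification of multiple structures on a line, so it is more conceptual and more portable; what the paper's buys is brevity --- it needs only one line $L^\prime$ and an explicit two-line computation.
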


\begin{proof} 
We have to show that $\text{deg}\, Z = 1$. Suppose that $\text{deg}\, Z \geq 
2$.  Then $Z$ contains, as a subscheme, a double structure $X$ on L. 
One can assume that $L$ is the line of equations $x_2 = x_3 = 0$ and that $Q$ 
is the quadric of equation $x_0x_3 - x_1x_2 = 0$. In this case, it is well 
known that $I(X) = (ax_3 - bx_2,\, x_2^2,\, x_2x_3,\, x_3^2)$, with $a,\, b \in 
k[x_0,x_1]$ coprime homogeneous polynomials of the same degree. 

Now, any line $L^\prime$ from the other ruling of $Q$ can be represented 
parametrically as the image of a morphism$\, :$ 
\[
\pj \lra \piii\, ,\  (u:t) \mapsto (c_0u:c_1u:c_0t:c_1t)  
\] 
for some $(c_0,c_1) \in k^2 \setminus \{(0,0)\}$. Choose $c_0,\, c_1$ such 
that $a(c_0,c_1)c_1 - b(c_0,c_1)c_0 = 0$. Then $ax_3 - bx_2 \vb L^\prime = 0$ 
hence $L^\prime \cap X$ is a double point on $L^\prime$ which \emph{contradicts} 
our hypothesis. 
\end{proof}

\begin{remark}\label{R:chern} 
Let $E$ be a rank 3 vector bundle on $\piii$, $Z$ a closed subscheme of 
$\piii$ of dimension $\leq 1$ and $m$ an integer. Assume that one has an 
epimorphism $\sigma : E \ra \sci_Z(m)$. In this case $\scf := \Ker \sigma$ is 
a rank 2 reflexive sheaf and one has an exact sequence$\, :$ 
\[
0 \lra \scf \lra E \lra \sci_Z(m) \lra 0\, .
\]
Let $Z_{\text{CM}}$ be the largest closed subscheme of $Z$ which is locally 
Cohen-Macaulay of pure dimension 1 (or empty if $\dim Z \leq 0$). One has an 
exact sequence$\, :$ 
\[
0 \lra \sct \lra \sco_Z \lra \sco_{Z_{\text{CM}}} \lra 0 
\]
with $\dim \text{Supp}\, \sct \leq 0$. The Hilbert polynomial of $\sco_Z$ has 
the form $\chi(\sco_Z(t)) = dt + \chi(\sco_Z)$, for some nonnegative integer 
$d$ which we denote by $\text{deg}\, Z$ (such that $\text{deg}\, Z = 0$ if 
$\dim Z \leq 0$). One has $\text{deg}\, Z = \text{deg}\, Z_{\text{CM}}$ and 
$\chi(\sco_Z) = \chi(\sco_{Z_{\text{CM}}}) + \text{length}\, \sct$. 

\vskip2mm 

We assert that, under the above hypotheses, one has $c_3(\scf) = 
\text{length}\, \sct$ and$\, :$ 
\begin{gather*} 
c_1(E) = c_1(\scf) + m\, ,\  \  c_2(E) = c_2(\scf) + mc_1(\scf) + 
\text{deg}\, Z_{\text{CM}}\, ,\\
c_3(E) = - c_3(\scf) + mc_2(\scf) + (c_1(\scf) - m + 4)\text{deg}\, Z_{\text{CM}} 
- 2\chi(\sco_{Z_{\text{CM}}})\, .  
\end{gather*}
\emph{Indeed}, it follows from \cite[Prop.~2.6]{ha} that $c_3(\scf) = 
\text{length}\, \sce xt^1(\scf, \omega_\piii)$. But 
\[
\sce xt^1(\scf, \omega_\piii) \simeq \sce xt^2(\sci_Z(m), \omega_\piii) 
\simeq \sce xt^3(\sco_Z(m), \omega_\piii) \simeq \sce xt^3(\sct(m), \omega_\piii) 
\] 
and $\text{length}\, \sce xt^3(\sct(m), \omega_\piii) = \text{length}\, \sct$. 

On the other hand, one can easily prove, using Riemann-Roch, that 
\[
c_1(\sci_Z(m)) = m\, ,\  \  c_2(\sci_Z(m)) = \text{deg}\, Z\, ,\  \  
c_3(\sci_Z(m)) = (4-m)\text{deg}\, Z - 2\chi(\sco_Z)
\] 
and our assertion follows. 
\end{remark}

We are now ready to give the 

\begin{proof}[Proof of Lemma~\ref{L:omega(1)iy(3)}] 
We begin by fixing some notation. 
Let us denote the union $L_1 \cup \ldots \cup L_5$ by $Y$. For $1 \leq i < j 
< l \leq 5$, let $Q_{ijl}$ be the quadric surface containing $L_i \cup L_j \cup 
L_l$ and let $q_{ijl} \in \tH^0(\sco_\piii(2))$ be an equation of $Q_{ijl}$. 
Consider, also, for $1 \leq i < j \leq 4$ the morphism 
\[
\theta_{ij} : \Omega_\piii(1) \lra \sco_\piii(1) 
\] 
with image $\sci_{L_i \cup L_j}(1)$ defined in Remark~\ref{R:theta}(ii). We 
denote by $t_{ij}$ the global section of $\text{T}_\piii$ corresponding to 
\[
\theta_{ij}^\vee(1) : \sco_\piii \lra \text{T}_\piii \, . 
\]

Now, we will show that, for general constants $a_1,\, a_2,\, a_3 \in k$, the 
image of the morphism 
\[
\sigma := a_1q_{345}\theta_{12} + a_2q_{145}\theta_{23} + a_3q_{125}\theta_{34} : 
\Omega_\piii(1) \lra \sco_\piii(3) 
\]
is $\sci_Y(3)$. 

\vskip2mm 

\noindent 
\emph{Indeed}, $\sigma^\vee(3) : \sco_\piii \ra \text{T}_\piii(2)$ is defined 
by the global section 
\[
s := a_1q_{345}t_{12} + a_2q_{145}t_{23} + a_3q_{125}t_{34}
\] 
of $\text{T}_\piii(2)$. If $Z$ is the zero scheme of $s$ then the image of 
$\sigma$ is $\sci_Z(3)$. Since, by Lemma~\ref{L:degeneracy}(b), the 
dependence locus of $t_{12},\, t_{23},\, t_{34}$ is $Q_{123} \cup Q_{234}$ and 
since, by Lemma~\ref{L:l5} applied to the lines $L_3,\, L_4,\, L_1,\, L_2,\, 
L_5$, one has 
\[
Q_{345} \cap Q_{145} \cap Q_{125} = L_5 \cup \Gamma_4 \cup \Gamma_1\, ,  
\] 
with $\Gamma_4 \subset L_4$ and $\Gamma_1 \subset L_1$, 
it follows that if $a_i \neq 0$, $i = 1,\, 2,\, 3$, then $Z \subseteq Q_{123} 
\cup Q_{234} \cup L_5$ as sets. 

We formulate, now, three claims. In each of them one assumes that the 
constants $a_1,\, a_2,\, a_3$ are general.  

\vskip2mm 

\noindent
{\bf Claim 1.}\quad \emph{If} $L \subset Q_{123}$ \emph{is a line intersecting} 
$L_1,\, L_2,\, L_3$ \emph{then} $L \cap Z = L \cap Y$ \emph{as schemes}. 

\vskip2mm 

\noindent
{\bf Claim 2.}\quad \emph{If} $L \subset Q_{234}$ \emph{is a line intersecting} 
$L_2,\, L_3,\, L_4$ \emph{then} $L \cap Z = L \cap Y$ \emph{as schemes}. 

\vskip2mm 

\noindent
{\bf Claim 3.}\quad \emph{If} $L \subset Q_{125}$ \emph{is a line intersecting} 
$L_1,\, L_2,\, L_5$ \emph{then} $L \cap Z = L \cap Y$ \emph{as schemes}. 

\vskip2mm 

Assuming the claims, for the moment, one deduces, from 
Lemma~\ref{L:lprimcapz}, that $Z_{\text{CM}} = Y$ as schemes (see 
Remark~\ref{R:chern} for the notation). Applying the Chern classes formulae 
from Remark~\ref{R:chern} to the exact sequence$\, :$ 
\[
0 \lra \scf \lra \Omega_\piii(1) \overset{\displaystyle \sigma}{\lra} 
\sci_Z(3) \lra 0 
\]    
(with $\scf := \Ker \sigma$) one gets that $c_3(\scf) = 0$ hence $Z = 
Z_{\text{CM}}$ as schemes and Lemma~\ref{L:omega(1)iy(3)} is proven. 

\vskip2mm 

Let us, finally, prove the three claims. We recall that, for every line 
$L \subset \piii$, one has $\text{T}_\piii \vb L \simeq \sco_L(2) \oplus 
2\sco_L(1)$. The zero scheme $Z(t_{ij} \vb L)$ of the global section 
$t_{ij} \vb L$ of $\text{T}_\piii \vb L$ is $L \cap (L_i \cup L_j)$. 

\vskip2mm 

\noindent
\emph{Proof of Claim 1.}\quad We denote by $P_i$ the intersection point of 
$L$ and $L_i$, $i = 1,\, 2,\, 3$. In case $L$ intersects $L_4$ (resp., $L_5$) 
we denote by $P_4$ (resp., $P_5$) their intersection point. One has to 
consider three cases.  

\vskip2mm     

\noindent 
$\bullet$\quad If $L \cap L_4 = \emptyset$ and $L \cap L_5 = \emptyset$ then, 
as divisors on $L$, 
\begin{gather*} 
Z(t_{12} \vb L) = P_1 + P_2\, ,\  \  Z(t_{23} \vb L) = P_2 + P_3\, ,\  \  
Z(t_{34} \vb L) = P_3\, ,\\ 
L \cap Q_{345} = P_3 + P_3^\prim \, ,\  \  L \cap Q_{145} = P_1 + P_1^\prim 
\, ,\  \  L \cap Q_{125} = P_1 + P_2  
\end{gather*}
for some points $P_1^\prim ,\, P_3^\prim$ of $L$. It follows that, choosing a 
convenient isomorphism $\text{T}_\piii \vb L \simeq \sco_L(2) \oplus 
2\sco_L(1)$, one has 
\[
t_{12} \vb L = (f_{12},0,0)\, ,\  \  t_{23} \vb L = (f_{23},0,0)\, ,\  \  
t_{34} \vb L = (f_{34}, \ell ,0)
\]
with $0 \neq f_{12}$ vanishing at $P_1$ and $P_2$, with $0 \neq f_{23}$ 
vanishing at $P_2$ and $P_3$, and with $f_{34}$ and $0 \neq \ell$ vanishing at 
$P_3$. One deduces easily that if $a_3 \neq 0$ then $L \cap Z = P_1 + P_2 + 
P_3$ as divisors on $L$.  

\vskip2mm 

\noindent
$\bullet$\quad If $L \cap L_4 \neq \emptyset$ then $L \cap L_5 = \emptyset$ 
and 
\begin{gather*}
Z(t_{12} \vb L) = P_1 + P_2\, ,\  \  Z(t_{23} \vb L) = P_2 + P_3\, ,\  \  
Z(t_{34} \vb L) = P_3 + P_4\, ,\\ 
L \cap Q_{345} = P_3 + P_4 \, ,\  \  L \cap Q_{145} = P_1 + P_4 
\, ,\  \  L \cap Q_{125} = P_1 + P_2\, .   
\end{gather*} 
It follows that, for general $a_1,\, a_2,\, a_3 \in k$, $L \cap Z = P_1 + P_2 
+ P_3 + P_4$ as divisors on $L$. Notice that there are at most two lines 
intersecting each of the lines $L_1,\, L_2,\, L_3,\, L_4$. 

\vskip2mm 

\noindent
$\bullet$\quad If $L \cap L_5 \neq \emptyset$ then $L \cap L_4 = \emptyset$ 
and 
\begin{gather*}
Z(t_{12} \vb L) = P_1 + P_2\, ,\  \  Z(t_{23} \vb L) = P_2 + P_3\, ,\  \  
Z(t_{34} \vb L) = P_3\, ,\\ 
L \cap Q_{345} = P_3 + P_5 \, ,\  \  L \cap Q_{145} = P_1 + P_5 
\, ,\  \  Q_{125} \supset L \, .  
\end{gather*} 
It follows that if $a_1,\, a_2 \in k$ are general and if $a_3 \in k$ is 
arbitrary then $L \cap Z = P_1 + P_2 + P_3 + P_5$ as divisors on $L$. Notice 
that are at most two lines intersecting each of the lines $L_1,\, L_2,\, 
L_3,\, L_5$. 

\vskip2mm 

\noindent 
\emph{Proof of Claim 2.}\quad 
One has to consider three cases. 

\vskip2mm 

\noindent
$\bullet$\quad If $L \cap L_1 = \emptyset$ and $L \cap L_5 = \emptyset$ then, 
as divisors on $L$, 
\begin{gather*}
Z(t_{12} \vb L) = P_2\, ,\  \  Z(t_{23} \vb L) = P_2 + P_3\, ,\  \  
Z(t_{34} \vb L) = P_3 + P_4\, ,\\ 
L \cap Q_{345} = P_3 + P_4\, ,\  \  L \cap Q_{145} = P_4 + P_4^\prim \, ,\  \  
L \cap Q_{125} = P_2 + P_2^\prim  
\end{gather*}
for some points $P_2^\prim ,\, P_4^\prim$ of $L$. One deduces, as in the first 
case of the proof of Claim 1, that if $a_1 \neq 0$ then $L \cap Z = P_2 + P_3 
+ P_4$ as divisors on $L$. 

\vskip2mm 

\noindent
$\bullet$\quad If $L \cap L_1 \neq \emptyset$ then $L \cap L_5 = \emptyset$ 
and 
\begin{gather*}
Z(t_{12} \vb L) = P_1 + P_2\, ,\  \  Z(t_{23} \vb L) = P_2 + P_3\, ,\  \  
Z(t_{34} \vb L) = P_3 + P_4\, ,\\ 
L \cap Q_{345} = P_3 + P_4\, ,\  \  L \cap Q_{145} = P_1 + P_4 \, ,\  \  
L \cap Q_{125} = P_1 + P_2\, . 
\end{gather*}    
It follows that, for general $a_1,\, a_2,\, a_3 \in k$, $L \cap Z = P_1 + P_2 + 
P_3 + P_4$ as divisors on $L$. Notice that there are at most two lines 
intersecting each of the lines $L_1,\, L_2,\, L_3,\, L_4$. 

\vskip2mm 

\noindent
$\bullet$\quad If $L \cap L_5 \neq \emptyset$ then $L \cap L_1 = \emptyset$ 
and 
\begin{gather*}
Z(t_{12} \vb L) = P_2\, ,\  \  Z(t_{23} \vb L) = P_2 + P_3\, ,\  \  
Z(t_{34} \vb L) = P_3 + P_4\, ,\\ 
Q_{345} \supset L\, ,\  \  L \cap Q_{145} = P_4 + P_5 \, ,\  \  
L \cap Q_{125} = P_2 + P_5\, . 
\end{gather*}    
It follows that if $a_2,\, a_3 \in k$ are general and $a_1 \in k$ is arbitrary 
then $L \cap Z = P_2 + P_3 + P_4 + P_5$ as divisors on $L$.  Notice that there 
are at most two lines intersecting each of the lines $L_2,\, L_3,\, L_4,\, 
L_5$.  

\vskip2mm

\noindent
\emph{Proof of Claim 3.}\quad One has to consider three cases. 

\vskip2mm 

\noindent
$\bullet$\quad If $L \cap L_3 = \emptyset$ and $L \cap L_4 = \emptyset$ then, 
as divisors on $L$, 
\begin{gather*}
Z(t_{12} \vb L) = P_1 + P_2\, ,\  \  Z(t_{23} \vb L) = P_2\, ,\  \  
Z(t_{34} \vb L) = \emptyset \, ,\\ 
L \cap Q_{345} = P_5 + P_5^\prim \, ,\  \  L \cap Q_{145} = P_1 + P_5\, ,\  \  
Q_{125} \supset L 
\end{gather*} 
for some point $P_5^\prim$ of $L$. One deduces, as in the first case of the 
proof of Claim 1, that if $a_2 \neq 0$ then $L \cap Z = P_1 + P_2 + P_5$ as 
divisors on $L$. 

\vskip2mm 

\noindent
$\bullet$\quad If $L \cap L_3 \neq \emptyset$ then $L \cap L_4 = \emptyset$ 
and 
\begin{gather*}
Z(t_{12} \vb L) = P_1 + P_2\, ,\  \  Z(t_{23} \vb L) = P_2 + P_3\, ,\  \  
Z(t_{34} \vb L) = P_3\, ,\\ 
L \cap Q_{345} = P_3 + P_5 \, ,\  \  L \cap Q_{145} = P_1 + P_5\, ,\  \  
Q_{125} \supset L\, . 
\end{gather*}
It follows that if $a_1,\, a_2 \in k$ are general and $a_3 \in k$ is arbitrary 
then $L \cap Z = P_1 + P_2 + P_3 + P_5$ as divisors on $L$. Notice that there 
are at most two lines intersecting each of the lines $L_1,\, L_2,\, L_3,\, 
L_5$. 

\vskip2mm 

\noindent
$\bullet$\quad If $L \cap L_4 \neq \emptyset$ then $L \cap L_3 = \emptyset$ 
and 
\begin{gather*}
Z(t_{12} \vb L) = P_1 + P_2\, ,\  \  Z(t_{23} \vb L) = P_2\, ,\  \  
Z(t_{34} \vb L) = P_4\, ,\\ 
L \cap Q_{345} = P_4 + P_5 \, ,\  \  Q_{145} \supset L\, ,\  \  
Q_{125} \supset L\, .  
\end{gather*}  
It follows that if $a_1 \neq 0$ then $L \cap Z = P_1 + P_2 + P_4 + P_5$ as 
divisors on $L$. 

This concludes the proof of the three claims above and, consequently, of 
Lemma~\ref{L:omega(1)iy(3)}.  
\end{proof}

\begin{remark}\label{R:questions} 
We do not know how to characterize the 4-instantons $F$ with $F(2)$ globally 
generated. Actually, we cannot answer even a simpler question$\, :$ 
as we recalled at the beginning of the paper, if $F$ is a 4-instanton then 
$F(4)$ is globally generated. Is it true that if $F$ has no jumping line of 
maximal order 4 then $F(3)$ is globally generated ?  
\end{remark}

\newpage 


\noindent 
{\bf Acknowledgements.}\quad The special form of the morphisms $\sigma : 
\Omega_\piii(1) \ra \sco_\piii(3)$ used in the proof of 
Lemma~\ref{L:omega(1)iy(3)} was ``guessed'' after several experiments using 
the progam \texttt{Macaulay2} of Grayson and Stillman \cite{mac}. 
N. Manolache expresses his thanks to Udo Vetter and the Institute of Mathematics, 
Oldenburg University, for warm hospitality during the preparation of this 
paper.

\end{document}